\newcommand{\fakesection}[1]{%
	\par\refstepcounter{section}
	\sectionmark{#1}
	\addcontentsline{toc}{section}{\protect\numberline{\thesection}#1}
}
\newcommand{\R}{{\mathbb R}}
\newcommand{\C}{{\mathbb C}}
\newcommand{\Z}{{\mathbb Z}}
\renewcommand{\d}{\partial}
\newcommand{\half}{{\frac{1}{2}}}
\renewcommand{\phi}{\varphi}
\newcommand{\acal}{\mathcal{A}}
\newcommand{\bcal}{\mathcal{B}}
\newcommand{\la}{\lambda}
\newtheorem{theo}{{\sc Theorem}}
\newtheorem{cor}[theo]{{\sc Corollary}}
\newtheorem{lem}[theo]{{\sc Lemma}}
\newtheorem{prop}[theo]{{\sc Proposition}}
\newtheorem{conj}[theo]{{\sc Conjecture}}
\newenvironment{defn}{\medskip\noindent{\it Definition:\/} }{\medskip}
\newtheorem{rema}[theo]{Remark}
\title[Spectral rigidity of the ellipse]
{Eigenfunction asymptotics and  spectral Rigidity of the ellipse}
\author{Hamid Hezari }
\address{Department of Mathematics, UC Irvine, Irvine, CA 92697, USA} \email{hezari@math.uci.edu}
\author{Steve Zelditch}
\address{Department of Mathematics, Northwestern  University,
Evanston, IL 60208-2370, USA} \email{
zelditch@math.northwestern.edu}
\thanks{Research of the second author is partially supported by NSF grant DMS-1810747.}
\date{\today}
\begin{document}

\begin{abstract}  Microlocal defect measures for Cauchy data of Dirichlet, resp. Neumann, eigenfunctions of an ellipse $E$ are determined.  We prove that,  for any invariant curve for the billiard map  on  the boundary phase space $B^*E$  of an ellipse, there exists a sequence of
eigenfunctions whose Cauchy data concentrates on the invariant curve.   We use this result to give a new proof that ellipses are infinitesimally
spectrally rigid among $C^{\infty}$ domains with the symmetries of
the ellipse.
\end{abstract}

\maketitle

\fakesection{Introduction}

This note is part of a series  \cite{HeZ12, HeZ19} on the the inverse spectral
problem for elliptical domains $E \subset \R^2$.  In \cite{HeZ12},  it is shown, roughly speaking, that an isospectral deformation of
an ellipse through smooth domains (but not necessarily real analytic) which preserves the $\Z_2 \times \Z_2$ symmetry is trivial. In \cite{HeZ19} it is shown that ellipses
of small eccentricity are uniquely determined by their Dirichlet (or, Neumann) spectra among all $C^{\infty}$ domains, with no analyticity
or symmetry assumptions imposed.  In both \cite{HeZ12, HeZ19},
the main spectral tool is the wave trace singularity expansion and the special  form it takes in the case of ellipses. 
In this article, we take the dual approach of  studying the asymptotic concentration in the phase space $B^* \partial E$ of the Cauchy data
$ u^b_{j} $  of  Dirichlet (or, Neumann) eigenfunctions $u_j$ of elliptical domains  in the unit coball bundle of
the boundary $\partial E$.    In Theorem \ref{LOCAL}, 
 we show that,  for every regular  rotation number of the billiard map in the `twist interval', there
exists a sequence of eigenfunctions whose Cauchy data concentrates on the invariant curve with that rotation number  in $B^* \partial E$.  The proof
uses the classical separation of variables and one dimensional WKB analysis.

Before stating the results we introduce some notation and background. An orthonormal basis of  Dirichlet (resp. Neumann) eigenfunctions  in a bounded, smooth
Euclidean plane domain $\Omega$  is denoted  by 
$$\left\{ \begin{array}{l} (\Delta + \lambda_j^2) \phi_j = 0, \;\; \langle \phi_j, \phi_k \rangle : = \int_{\Omega} \phi_j \bar{\phi_k} dx, \\ \\ 
\phi_j |_{\partial \Omega} = 0, \; (\rm{resp.} \; \partial_{\nu} \phi_j |_{\partial \Omega} = 0), \end{array} \right. $$
where as usual $\partial_{\nu}$ denotes the inward unit normal. The semi-classcial Cauchy data is denoted by,
\begin{equation} \label{ujbdef} u_j^b : = \left\{ \begin{array}{ll} \phi_j |_{\partial \Omega},
& \mbox{Neumann} \\ & \\
\lambda_j^{-1} \partial_{\nu} \phi_j |_{\partial \Omega}, & \mbox{Dirichlet}.
\end{array} \right. \end{equation}
The Cauchy data are eigenfunctions of  the semi-classical eigenvalue problem, $N(\lambda_j) u_j^b = u_j^b$, where $N(\lambda)$ is a semi-classical Fourier integral operator quantizing the billiard map $\beta: B^*\partial \Omega \to B^*\partial \Omega$ (see \cite{HZ} for the precise statement).

We are interested here in the quantum limits of the Cauchy data \eqref{ujbdef} of an orthonormal basis of eigenfunctions of an ellipse, i.e. in the asymptotic limits
of the matrix elements
\begin{equation} \label{ME} \rho_j^b(Op_{\hbar}(a)):= \frac{\langle Op_{\hbar}(a)  u_j^b, u_j^b
\rangle}{\langle u_j^b, u_j^b \rangle},\;\; (\hbar  = \hbar_j = \lambda_j^{-1})  \end{equation}
of zeroth order semi-classical pseudo-differential operators $ Op_{\hbar}(a)  $ on $\partial E$ with respect to the $L^2$-normalized Cauchy data of eigenfunctions.
We note that $\rho_j^b$ is normalized so that $\rho_j^b(I) = 1$ and is a positive linear functional, hence all possible weak* limits are probability measures
on the unit coball bundle $B^*\partial \Omega$. Moreover, $\rho_j(N(\lambda)^* Op_{\hbar}(a) N(\lambda)) = \rho_j(Op_{\hbar}(a))$, so that the quantum limits
are quasi-invariant under the billiard map (see \cite{HZ} for precise statements). In Theorem  \ref{LOCAL} we determine the quantum limits of
sequences in \eqref{ME} for an ellipse. The proof uses many of the prior results on  WKB formulae for ellipse eigenfunctions, especially
those of \cite{KR60, WWD,Sie97}.


In large part, our interest in matrix elements \eqref{ME} owes to the fact that the  Hadamard variational formulae for eigenvalues of the Laplacian with Dirichlet boundary condition
expresses the eigenvalue variations as the special matrix elements \eqref{ME} given by,  \begin{equation} \label{MERHO}  \int_{\partial E} \dot{\rho} \;\; |u^b_{j}|^2 ds  \end{equation}  of the domain 
variation $ \dot{\rho}$ (not to be confused with $\rho_j$) against squares of the Cauchy data (see Section
\ref{HD}). 
As stated in Corollary \ref{LOCALCOR}, the limits of such integrals over all
 possible subsequences of eigenfunctions determines the  `Radon transform' of $\dot{\rho}$
over all possible invariant curves for the billiard map.  Under an infinitesimal isospectral deformation, all of the limits are zero. 
We use this result to give   a new proof of the spectral rigidity result in \cite{HeZ12}; see Theorem \ref{RIGID} and Corollary \ref{RIGIDCOR}.  

The principal  motivation for studying the inverse Laplace spectral problems for ellipses stems from the  Birkhoff conjecture 
that ellipses are the only bounded plane domains with completely integrable billiards. 
  Strong recent results, due to A. Avila, J. de Simoi,  V. Kaloshin, and A. Sorrentino
  \cite{AdSK,KS18}
have proved local versions of the Birkhoff conjecture using a weaker notion of integrability known as `rational integrability', i.e. 
that periodic orbits come in one-parameter families, namely invariant curves of the billiard map with rational rotation number. In this
article, Bohr-Sommerfeld invariant curves play the principal role rather than curves of periodic orbits.

\subsection{Statement of results}

The first result pertains to concentration of Cauchy data of sequences $\phi_{j}$ of Dirichlet (resp. Neumann) eigenfunctions on invariant curves
of the billiard map of an ellipse. We denote $E$ by
$\frac{x^2}{a^2}  +\frac{ y^2}{b^2} \leq 1$, $0 \leq b < a$, and choose the elliptical coordinates $(\rho, \vartheta)$
by
$$(x, y) = (c\cosh \rho \cos \vartheta, c \sinh \rho  \sin \vartheta). $$
Here,
$$c = \sqrt{a^2 - b^2}, \;\; 0 \leq \rho \leq \rho_{\max} = \cosh^{-1} (a/c), \; \;  0 \leq \vartheta
\leq 2 \pi. $$ We denote the angular Hamiltonian, which we will also call the action, by 
$$ I = {p_\vartheta^2}/c^2 + \cos^2 \vartheta.$$
The invariant curves of $\beta$ are the level sets of $I$. The range of $I$ is called the action interval. There is a natural measure $d\mu_\alpha$ on each level set $I=\alpha$ called the Leray measure which is invariant under $\beta$ and the flow of $I$. We refer to Section \ref{BACKGROUND} for detailed definitions and properties involving the billiard map of an ellipse, actions, invariant curves, and the Leray measure. 

 \begin{theo} \label{LOCAL} Let $E$ be an ellipse.  For any $\alpha$  in the action interval 
 of the billiard map of $E$, there exists a sequence of separable (in elliptical coordinates) eigenfunctions $\{\phi_{j} \}$ of eigenvalue $\lambda_j^2$  whose Cauchy data  
 concentrates on the level set $\{I = \alpha\}$, in the sense that, for any zeroth order semi-classical pseudo-differential operator 
 $Op_{\hbar}(a)$ on $B^* \partial E$ with principal symbol $a_0$,
 \begin{equation} \label{alphaformintro } \frac{\langle Op_{\hbar_j}(a)  u_j^b, u_j^b
\rangle}{\langle u_j^b, u_j^b \rangle} \to \frac{ \int_{I = \alpha} a_0 d\nu_{\alpha}} { \int_{I = \alpha} d\nu_{\alpha}}, \quad (h_j =\lambda_j^{-1} \to 0) \end{equation} 
where
 \begin{equation} \label{dnu} 
 d \nu_\alpha =\left\{ \begin{array}{ll}  \frac{1}{\sqrt{c^2(\cosh^2 \rho_{\max} - \cos^2 \vartheta)}} d \mu_\alpha, \quad & \mbox{Dirichlet}, \\&\\ \small \sqrt{c^2(\cosh^2 \rho_{\max} - \cos^2 \vartheta)}   \; d \mu_\alpha, \quad & \mbox{Neumann}. \end{array} \right.
\end{equation} 
\end{theo}

 In particular,
\begin{cor} \label{LOCALCOR} In the special case when the symbol $a(\vartheta, p_\vartheta) = \dot{\rho}(\vartheta)$ is only a function of the base variable $\vartheta$,
$$\frac{\int_{\partial E} \dot{\rho} \;\; |u^b_{j}|^2 ds}{\int_{\partial E} |u^b_{j}|^2 ds} \to \frac{\int_{I = \alpha} \dot{\rho} \, d\nu_{\alpha}}{ \int_{I = \alpha} d\nu_{\alpha}}\,
,$$
where $ds = \small \sqrt{c^2(\cosh^2 \rho_{\max} - \cos^2 \vartheta)} \, d \vartheta$ is the arclength measure. 
\end{cor}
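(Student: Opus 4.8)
The plan is to obtain Corollary \ref{LOCALCOR} as the special case of Theorem \ref{LOCAL} in which the symbol has no dependence on the fiber variable $p_{\vartheta}$, so that $Op_{\hbar}(a)$ is just a multiplication operator. First I would recall that a zeroth order semiclassical symbol of the form $a(\vartheta, p_{\vartheta}) = \dot{\rho}(\vartheta)$, independent of $p_{\vartheta}$, quantizes (for any of the standard quantizations) to the operator of multiplication by the function $\dot{\rho}$ on $\partial E$, where $\dot{\rho}$ is regarded as a function of the boundary point through the elliptical coordinate $\vartheta$ at $\rho = \rho_{\max}$; in particular its principal symbol is $a_0(\vartheta, p_{\vartheta}) = \dot{\rho}(\vartheta)$, i.e. the pullback to $B^*\partial E$ that is constant along the cotangent fibers. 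Since the inner product $\langle \cdot, \cdot \rangle$ on the boundary is taken against the arclength measure $ds = \sqrt{c^2(\cosh^2\rho_{\max} - \cos^2\vartheta)}\,d\vartheta$, this gives
\[
\langle Op_{\hbar_j}(\dot{\rho})\, u_j^b, u_j^b\rangle = \int_{\partial E} \dot{\rho}\,|u_j^b|^2\, ds, \qquad \langle u_j^b, u_j^b\rangle = \int_{\partial E} |u_j^b|^2\, ds,
\]
so the left-hand side of the asserted limit is exactly the normalized matrix element $\rho_j^b(Op_{\hbar_j}(\dot{\rho}))$ of \eqref{ME}, whose numerator is the Hadamard integral \eqref{MERHO}.

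With this identification in hand, I would simply apply Theorem \ref{LOCAL} with this choice of $a$. The sequence $\{\phi_j\}$ it furnishes for the given $\alpha$ satisfies the limit formula of that theorem, whose right-hand side, with $a_0 = \dot{\rho}$, reads $\bigl(\int_{I=\alpha}\dot{\rho}\,d\nu_\alpha\bigr)\big/\bigl(\int_{I=\alpha} d\nu_\alpha\bigr)$. This is precisely the claimed limit, which completes the argument.

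There is essentially no technical obstacle here: the substantive content lies entirely in Theorem \ref{LOCAL}, and the corollary is only a reinterpretation of a fiber-independent symbol as a multiplication operator together with a reading-off of its principal symbol. The one point that requires attention is the bookkeeping of measures — making sure the boundary $L^2$ pairing is the one against arclength $ds$, so that the passage from $\langle Op_{\hbar_j}(\dot{\rho})u_j^b, u_j^b\rangle$ to $\int_{\partial E}\dot{\rho}\,|u_j^b|^2\, ds$ is carried out with the correct density, and noting that this same density $\sqrt{c^2(\cosh^2\rho_{\max}-\cos^2\vartheta)}$ is the factor appearing (in the Neumann case, and inversely in the Dirichlet case) in the definition of $d\nu_\alpha$ in \eqref{dnu}.
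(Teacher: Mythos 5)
Your argument is correct and coincides with the paper's (implicit) derivation: the corollary is stated with "In particular" as the immediate specialization of Theorem \ref{LOCAL} to a fiber-independent symbol, for which $Op_{\hbar}(a)$ is multiplication by $\dot{\rho}$ and the matrix element reduces to the arclength integral $\int_{\partial E}\dot{\rho}\,|u_j^b|^2\,ds$. Your care about which measure underlies the boundary $L^2$ pairing is exactly the right bookkeeping point, and nothing further is needed.
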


\begin{rema}
	If we denote $\eta$ to be the symplectic dual variable of the arclength $s$, then our quantum limit can be expressed as 
	 \begin{equation*} \label{dnub} 
	d \nu_\alpha =\left\{ \begin{array}{ll} \sqrt{1-|\eta|^2} \, d \mu_\alpha, \quad & \mbox{Dirichlet}, \\&\\ \small  \frac{1}{\sqrt{1-|\eta|^2}}  \; d \mu_\alpha, \quad & \mbox{Neumann}. \end{array} \right.
	\end{equation*} 
	For the proof, see our computation of ${1-|\eta|^2}$ in the proof of Corollary \ref{ZERO}. 
	
	The appearance of the (non-invariant) factors  $\sqrt{1-|\eta|^2}$ and $1/ \sqrt{1-|\eta|^2}$ is consistent with the result of \cite{HZ}, where the quantum limits of boundary traces of ergodic billiard tables are studied. 
\end{rema}

 To our knowledge, Theorem \ref{LOCAL} is the first result on microlocal defect measures of Cauchy data of eigenfunctions
in non-ergodic cases. See Section \ref{RELATED} for related results. 
 One of the difficulties in determining the  limits of \eqref{ME} is that the Cauchy data $u_j^b$ are not $L^2$ normalized. 
It is shown in \cite[Theorem 1.1]{HT} that there exists $C, c > 0$ so that  $$ c \leq ||\lambda_j^{-1} \partial_{\nu} \phi_j||_{L^2(\partial \Omega)} \leq C $$for Dirichlet eigenfunctions of  Euclidean plane domains (and more general non-trapping cases). Hence the $L^2$ normalization in \eqref{ME} is rather mild. On the other hand, the corresponding inequalities do not hold in general
for Neumann eigenfunctions.  As pointed out in \cite[Example 7]{HT},  there are simple counter-examples to any constant upper bound on the unit disc (whispering
gallery modes). There do exist positive lower bounds for convex Euclidean domains. Hence, in  the case of an ellipse, the $L^2$ normalization in
\eqref{ME} is necessary to obtain limits.

\subsection{Spectral rigidity}

Before stating the results,  we review the main definitions.
An isospectral deformation of a plane domain $\Omega_0$  is a
one-parameter family $\Omega_t$ of plane domains for which the
spectrum of the Euclidean Dirichlet (or Neumann, or Robin) Laplacian
$\Delta_t$ is constant (including multiplicities). The deformation
is said to be a $C^1$ deformation through $C^{\infty}$ domains if
each $\Omega_t$ is a $C^{\infty}$ domain and the map $t \to
\Omega_t$ is $C^1$. We parameterize the boundary $\partial
\Omega_t$ as the image under the  map \begin{equation}
\label{rhodef}  x \in
\partial \Omega_0 \to x + \rho_t(x) \nu_x, \end{equation} where  $\rho_t \in
C^1([0, t_0], C^{\infty}(\partial \Omega))$. The first variation
is defined to be $\dot{\rho}(x) : = \frac{d}{dt}{|_{t=0}}
\rho_t(x)$.  An isospectral deformation is said to be trivial if
$\Omega_t = \Omega_0$ (up to isometry) for sufficiently small $t$.
A domain $\Omega_0$ is said to be spectrally rigid if all
$C^{\infty}$ isospectral deformations are trivial.

In \cite{HeZ12} the authors proved a somewhat weaker form of
spectral rigidity for ellipses, with `flatness' replacing `triviality'.
Its main result is the infinitesimal spectral rigidity of ellipses
among $C^{\infty}$  plane domains with the symmetries of an
ellipse. We orient the domains so that the symmetry axes are the
$x$-$y$ axes. The symmetry assumption is then that $ \rho_t$ is
invariant under $(x, y) \to (\pm x, \pm y)$. The variation is
called infinitesimally spectrally rigid if $\dot{\rho}_0 = 0$.

The  main result of \cite{HeZ12} is:

\begin{theo} \label{RIGID} Suppose that $\Omega_0$ is an ellipse, and that
$\Omega_t$ is a $C^1$ Dirichlet (or Neumann) isospectral
deformation of $\Omega_0$ through $C^{\infty}$ domains with  $\Z_2
\times \Z_2$ symmetry. Let $\rho_t$ be as in (\ref{rhodef}).  Then
$\dot{\rho} = 0$.

\end{theo}

\begin{cor} \label{RIGIDCOR} Suppose that $\Omega_0$ is an ellipse, and that
$t \to \Omega_t$ is a $C^1$   Dirichlet (or Neumann) isospectral
deformation through $\Z_2 \times \Z_2$ symmetric $C^{\infty}$
domains. Then $\rho_t$ must be flat at $t = 0$.
\end{cor}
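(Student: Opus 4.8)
The plan is to derive Corollary \ref{RIGIDCOR} from Theorem \ref{RIGID} by the standard reparametrization--and--iteration device for upgrading infinitesimal rigidity to higher order flatness. Since $\Omega_0$ is an ellipse, Theorem \ref{RIGID} applies directly and gives $\rho_0=0$ and $\dot\rho=\frac{d}{dt}\big|_{t=0}\rho_t=0$; equivalently $\rho_t=o(t)$ in $C^\infty(\partial\Omega_0)$. The content of the Corollary is to bootstrap this first--order statement to vanishing of all the Taylor coefficients of $t\mapsto\rho_t$ at $t=0$, i.e. $\rho_t=o(t^n)$ for every $n$.

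Suppose, toward a contradiction, that $t\mapsto\rho_t$ is not flat at $t=0$, and let $k\ge 2$ be the smallest order at which $\rho_t$ fails to vanish, so that (after the reduction carried out in \cite{HeZ12}, using the differentiability in $t$ of the deformation) we may write $\rho_t=t^k\psi+o(t^k)$ with $\psi\in C^\infty(\partial\Omega_0)$, $\psi\neq 0$. Reparametrize by $s=t^k$: set $\widetilde\Omega_s:=\Omega_{s^{1/k}}$ for $s\in[0,t_0^{\,k}]$, and $\widetilde\rho_s:=\rho_{s^{1/k}}$. Each $\widetilde\Omega_s$ is one of the domains $\Omega_t$, hence is a $C^\infty$ domain, carries the $\Z_2\times\Z_2$ symmetry, and is isospectral to $\Omega_0=\widetilde\Omega_0$; moreover, the expansion $\rho_t=t^k\psi+o(t^k)$ together with the regularity of $t\mapsto\rho_t$ shows that $s\mapsto\widetilde\rho_s$ is a $C^1$ map into $C^\infty(\partial\Omega_0)$ on $[0,t_0^{\,k}]$ with $\widetilde\rho_0=0$ and first variation $\frac{d}{ds}\big|_{s=0}\widetilde\rho_s=\psi$. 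Thus $\widetilde\Omega_s$ is a $C^1$ Dirichlet (resp.\ Neumann) isospectral deformation of the ellipse $\Omega_0$ through $\Z_2\times\Z_2$--symmetric $C^\infty$ domains, and Theorem \ref{RIGID} forces its first variation $\psi$ to vanish --- contradicting $\psi\neq 0$. Hence no such $k$ exists and $\rho_t$ is flat at $t=0$.

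The step I expect to be the main obstacle is the regularity bookkeeping in the reparametrization: one must check that $s\mapsto\widetilde\Omega_s$ is genuinely a $C^1$ family of $C^\infty$ domains up to and including the endpoint $s=0$, and the $C^1$--ness precisely at $s=0$ is delicate, since it requires using the exact sense in which $t\mapsto\rho_t$ is differentiable and quantitative control of the remainder $o(t^k)$ (a bare $C^1$--in--$t$ hypothesis on $\rho_t$ does not by itself produce the coefficient $\psi$, which is why the argument is organized around the minimal order $k$ and the reduction of \cite{HeZ12}). One must also take care that the notion of flatness one obtains matches the regularity actually available for the deformation; this mismatch is exactly the reason \cite{HeZ12} settles for `flatness' in place of `triviality', and we follow the same route here.
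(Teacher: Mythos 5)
Your proposal is correct and is essentially the paper's own argument: the paper proves Corollary \ref{RIGIDCOR} by deferring to Section 3.2 of \cite{HeZ12}, which carries out exactly this reparametrization-and-bootstrap device (isolate the first nonvanishing Taylor coefficient $\psi$ at order $k$, set $s=t^k$, and apply the infinitesimal rigidity of Theorem \ref{RIGID} to the reparametrized deformation to force $\psi=0$). Your caveat about the $t$-regularity needed to extract $\psi$ from a deformation stated only to be $C^1$ in $t$ is apt, and it is resolved in \cite{HeZ12} in the same way you indicate.
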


The proof of Theorem \ref{RIGID} in \cite{HeZ12} used the variation of the wave trace. In the original posting  (arXiv:1007.1741) the authors used a
more classical Hadamard variational formula for variations of individual eigenvalues $\lambda_j(t)$, which appears in Section \ref{HD}.  The authors rejected this approach in favor of the one appearing in \cite{HeZ12} because it  was thought  that this argument was invalid when the eigenvalues were  multiple. When a multiple eigenvalue of a 1-parameter family $L_t$ of operators is perturbed, it splits into a collection of branches which  in general
are not differentiable in $t$. Moreover, the authors  assumed that the
variational formula would express the variation in terms of special separable eigenfunctions (see Section \ref{SEP}).  This created doubt that one could use the variational formula for individual eigenvalues. Instead, the authors used the variational formula for the trace of the wave group or equivalently for spectral projections, which are symmetric sums over all of the branches into which an eigenvalue splits. 

 However, as we show in this article,  the original variational formulae
were in fact correct even in the presence of multiplicities. The first point is that the non-differentiability issue does not arise for an isospectral deformation since no splitting occurs. Second, the vanishing of  the variation of eigenvalues  implies that the infinitesimal variation $\dot{\rho}$ is orthogonal to squares of all (Dirichlet) eigenfunctions in the eigenspace, and in particular the separable ones. More precisely, we prove that 
$$ \int_{\partial E} \dot{\rho} \;\; |u^b_{j}|^2 ds =0.$$
Then by Corollary \ref{RIGIDCOR}, we obtain that for every $\alpha$ in the action interval one has
\begin{equation} \label{AbelTransform} \int_{I=\alpha} \dot{\rho} \, d\nu_{\alpha}=0. \end{equation}
In the final step we calculate the measure $d\nu_{\alpha}$ and provide two proofs, one via inverting an Abel transform and another using the Stone-Weierstrass theorem, that \eqref{AbelTransform} implies $\dot{\rho} =0$. The proof in the Neumann case is similar and will be provided.

\subsection{\label{RELATED}  Related results and open problems }

Quantum limits of Cauchy data on manifolds with boundary have been studied in  \cite{HZ, CTZ} in the case where the billiard map $\beta$ is ergodic. To our knowledge, they have
not been studied before in non-ergodic cases.  Theorem \ref{LOCAL} shows that, as expected,  Cauchy data of eigenfunctions localize on invariant curves for the billiard map rather
than delocalize as in ergodic cases.

 $L^2$ norms of Cauchy data of eigenfunctions are studied in \cite{HT} in the Dirichlet case and in  \cite{BHT} in the  Neumann case. Further results on the quasi-orthonormality properties of Cauchy are studied in 
 \cite{BFSS, HHHZ}.

 The study of
 eigenfunctions in ellipses has a long literature and we make substantial use of it. In particular, we quote 
several articles in the physics literature, in particular
 \cite{WWD, Sie97}, and several in mathematics \cite{KR60,BB},   for detailed analyses  of eigenfunctions of the quantum ellipse.  There is also 
 a  series of articles of G. Popov and P. Topalov (see e.g. \cite{PT03,PT16}) on the use of KAM quasi-modes to study Laplace inverse
 spectral problems. In particular, in \cite{PT16}, Popov-Topalov also give a new proof of the rigidity result of \cite{HeZ12} and extend it to
 other settings. The approach in this article is  closely related to theirs, although it does not seem that the authors directly studied Cauchy
 data of eigenfunctions of an ellipse.

The multiplicity of Laplace eigenvalues of an ellipse appears to be largely an open problem. It is a non-trivial result of
C.L. Siegel that the multiplicities are either $1$ or $2$ in the case of  circular billiards; multiplicity $1$ occurs for, and only for,
rotationally invariant eigenfunctions.  The Laplacians of the  family of ellipses $\frac{x^2}{a^2} + \frac{y^2}{b^2} =1$ form an analytic
family containing the disk Laplacian, and one might try to use analytic perturbation theory to prove the following, 
\begin{conj} \label{MULT} For a generic class of ellipses the multiplicity of each eigenvalue is $\leq 2$. \end{conj}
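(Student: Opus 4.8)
The plan is to prove the conjecture by analytic perturbation theory in the eccentricity --- the route hinted at above --- but with the essential extra step of first reducing by the $\Z_2\times\Z_2$ symmetry. I would normalize the ellipses to $E_c=\{x^2+(1-c^2)^{-1}y^2\le 1\}$, $c\in[0,1)$, so that $E_0$ is the unit disk $D$, and pull $E_c$ back to $D$ by the linear map $(x,y)\mapsto(x,\sqrt{1-c^2}\,y)$, which is equivariant for the reflections in the two coordinate axes. The Dirichlet (resp. Neumann) Laplacian of $E_c$ then becomes $H_c=-\d_x^2-(1-c^2)^{-1}\d_y^2$ on $D$; since the form domain $H^1_0(D)$ (resp. $H^1(D)$) is $c$-independent and $(1-c^2)^{-1}$ is analytic on $(-1,1)$, this is a real-analytic self-adjoint family of Kato type (B), so by Rellich's theorem its eigenvalues, counted with multiplicity, can be listed as real-analytic functions of $c$ on $(-1,1)$; two such branches either coincide identically or meet on a set with no accumulation point. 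As $H_c$ commutes with the ($c$-independent) $\Z_2\times\Z_2$-action, it preserves the four isotypic subspaces $V^\sigma$, $\sigma\in\{(+,+),(+,-),(-,+),(-,-)\}$; write $H^\sigma_c$ for the restrictions and $\{\mu^\sigma_\ell(c)\}$ for their Rellich branches.

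The first real input is the base case. At $c=0$ the disk spectrum is $\{j_{n,k}^2\}$ ($n\ge 0$, $k\ge 1$), with real eigenfunctions $J_n(j_{n,k}r)\cos n\theta$ and $J_n(j_{n,k}r)\sin n\theta$ lying respectively in the classes labelled $((-1)^n,+)$ and $((-1)^{n+1},-)$ (eigenvalues under the two axis-reflections); hence $\operatorname{spec}(H^\sigma_0)$ is exactly $\{j_{n,k}^2: n \text{ of the parity imposed by }\sigma\}$. By Siegel's theorem --- the $j_{n,k}$, $n\ge 0$, are pairwise distinct; equivalently, disk multiplicities are $1$ or $2$ --- each $\operatorname{spec}(H^\sigma_0)$ is \emph{simple}, and, reading off the parities, the only pairs of classes that can share a disk eigenvalue are the ``partner'' pairs $\{(+,+),(-,-)\}$ and $\{(-,+),(+,-)\}$; any two \emph{non-partner} classes have disjoint $c=0$ spectra.

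From here the argument is largely formal. Since the branches $\mu^\sigma_\ell$ are distinct at $c=0$, no two coincide identically, so the set $B^\sigma$ of $c$ at which $H^\sigma_c$ is not simple is a countable union of discrete sets, hence countable; likewise, for each non-partner pair $(\sigma,\sigma')$ the branches $\mu^\sigma_i$, $\mu^{\sigma'}_j$ differ at $c=0$ for all $i,j$, so $C_{\sigma\sigma'}=\{c:\operatorname{spec}(H^\sigma_c)\cap\operatorname{spec}(H^{\sigma'}_c)\ne\varnothing\}$ is countable. Let $B$ be the (countable) union of the four $B^\sigma$ and the four $C_{\sigma\sigma'}$ over non-partner pairs. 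For $c\notin B$, an eigenspace of $H_c$ splits into its four isotypic pieces, each of dimension $\le 1$ (because $c\notin B^\sigma$), so the multiplicity is $\le 4$; and multiplicity $\ge 3$ would force three of the four classes to realize the eigenvalue, yet any three of the four classes contain a non-partner pair, putting $c$ into some $C_{\sigma\sigma'}\subset B$ --- a contradiction. Hence every eigenvalue of $E_c$ has multiplicity $\le 2$ for all $c$ outside a countable set, a comeager full-measure set of ellipses; the Neumann case is identical.

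The points that need care, though not deep, are checking that $\{H_c\}$ really is a Kato-type analytic family so that Rellich's global theorem applies, and that $B$ is only countable. The genuine obstacle appears if one wants the stronger --- and presumably intended --- statement that the generic ellipse has \emph{simple} spectrum: one must then show that the degeneracies forced at $c=0$ between the partner classes $(+,+)/(-,-)$ and $(-,+)/(+,-)$ actually break for $c\ne 0$, equivalently that the two associated Mathieu-type radial quantization conditions are not identically equal in $c$. A first-order computation will not decide this: if $\dot{\rho}$ denotes the (quadrupolar) infinitesimal deformation of the disk into $E_c$, the Hadamard variations $\propto\int_{\partial E}\dot{\rho}\,|u^b|^2\,ds$ of the two partner branches through each $j_{n,k}^2$ with $n\ge 2$ coincide, since their difference is proportional to $\int_0^{2\pi}\dot{\rho}\,\cos 2n\theta\,d\theta=0$. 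One is then thrown onto a second-order (or higher) perturbation analysis, or onto a transcendence statement for Mathieu characteristic values parallel to Siegel's for Bessel zeros --- which seems to be the real difficulty, and the likely reason the statement is posed as a conjecture.
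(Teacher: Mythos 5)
The statement you were asked to prove is posed in the paper as an open conjecture: the authors give no proof, only the remark that ``one might try to use analytic perturbation theory.'' So there is no proof of record to compare yours against; what you have written is an attempt to actually settle the conjecture, and as far as I can check it succeeds, at least with ``generic'' read as ``all but countably many eccentricities'' (a co-countable, hence comeager and full-measure, set). Your three inputs are all legitimate: (i) the pulled-back family $H_c=-\d_x^2-(1-c^2)^{-1}\d_y^2$ with fixed form domain is a self-adjoint holomorphic family of type (A) (Dirichlet) or (B) (Neumann) with compact resolvent, so Kato--Rellich (Kato, Thm.\ VII-3.9 and its type-(B) analogue) gives a global enumeration of the spectrum by real-analytic branches on $(-1,1)$ --- this is the one place where you should cite the precise global statement, since the local version alone does not immediately give countability of the bad set; (ii) Siegel's theorem (the Bourget hypothesis) that the $j_{n,k}$ are pairwise distinct across orders, which is exactly the input the paper itself invokes for the disk; (iii) the combinatorial observation that the four isotypic classes pair off by parity of $n$ at $c=0$, so that any three classes contain a non-partner pair whose spectra are disjoint at $c=0$ and hence can only collide on a countable set. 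Given (i)--(iii), the identity theorem for real-analytic functions does the rest, and multiplicity $\geq 3$ is excluded off a countable set of $c$.

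Two further comments. First, your last paragraph correctly isolates what this argument does \emph{not} give and why the problem is genuinely hard beyond the stated conjecture: whether the partner degeneracies $(+,+)/(-,-)$ and $(-,+)/(+,-)$ inherited from the disk actually split for $c\neq 0$ is exactly the question of whether $b'_{n}(\hbar)=B'_m(\hbar)$ and $a'_n(\hbar)=A'_m(\hbar)$ have distinct solutions; the paper's own estimate $b'_{n+1}(\hbar)-a'_n(\hbar)=\mathcal{O}(e^{-C/\hbar})$ shows the splitting, if any, is beyond all orders, so no finite-order perturbation computation (your first-order Hadamard calculation included) can detect it. Second, it is worth stating explicitly in a write-up that multiplicities are scale-invariant, so reducing the two-parameter family of ellipses to the one-parameter family $E_c$ loses nothing, and that the conclusion is about a co-countable set of eccentricities rather than an open dense one --- the latter would require knowing that the crossings in $B$ are isolated in a locally finite way, which your argument does not (and need not) provide.
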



 \subsection{Quantum Birkhoff conjecture}
 As mentioned above, ellipses have completely integrable billiards, and the 
classical Birkhoff conjecture is that elliptical billiards are the only completely integrable Euclidean billiards with convex bounded
 smooth domains. Despite much recent progress, the Birkhoff conjecture remains open.
 
 The eigenvalue problem on a Euclidean domain is often called `quantum billiards' in the physics literature (see e.g. \cite{WWD}).
 One could formulate  quantum analogues of the Birkhoff conjecture in several related but different ways. The   quantum analogue
of the Birkhoff conjecture is presumably that ellipses are the only `quantum integrable' billiard tables.  A standard notion of
quantum integrability is that the Laplacian
commutes with a second, independent,  (pseudo-differential) operator; we refer to \cite{TZ03} for background on quantum integrability. In Section \ref{SEP}, 
we explain  that the ellipse  is quantum integrable in that one may construct two commuting Schr\"odinger operators with the same eigenfunctions
and eigenvalues. The symbol
of the second operator then Poisson commutes with the symbol of the Laplacian, hence the billiard dynamics and billiard map are integrable. 
 A related version is that one can separate variables in solving the Laplace eigenvalue
problem.   It is not obvious that these two notions are equivalent;  in Section \ref{SEP} we use both  separation of variables and existence of  commuting
operators in studying the ellipse. Classical studies of separation of variables and its relation to integrability go back
 to C. Jacobi, P. St\"ackel,   L. Eisenhart and others,
and E.K. Sklyanin has studied the problem more recently. We do not make use of their results here.

Quantum integrability is much stronger than classical integrability, and one might guess that it is   simpler to prove the quantum
Birkhoff conjecture than the classical one.  Wave trace  techniques as  in \cite{HeZ12,HeZ19}  reduce Laplace  spectral determination and rigidity problems to dynamical inverse or rigidity results. The wave trace only `sees' periodic orbits and is therefore
well-adapted to results on rational integrability.  The dual approach through eigenfunctions
 studied in this article gives a different path to the quantum Birkhoff conjecture, in which rational integrability and periodic orbits play no role.

\subsection*{Acknowledgment} We thank Luc Hillairet for a discussion which prompted the revival of this note.

\section{\label{BACKGROUND} Classical billiard dynamics}

In this, and the next, section, we review some background definitions and  results on the
classical and quantum elliptical billiard. We follow the notation
of \cite{Sie97}; see also \cite{BB,WWD}.

An ellipse $E$ is a plane domain defined by,
$$\frac{x^2}{a^2}  +\frac{ y^2}{b^2} \leq 1, \;\;\; 0 \leq b < a. $$
Here, $a,$ resp. $b$, is the length of the semi-major (resp.
semi-minor) axis. The ellipse has foci at $(\pm c, 0)$ with $c =
\sqrt{a^2 - b^2}$ and its eccentricity is $e = \frac{c}{a}$. Its
area is $\pi a b$, which is fixed under an isospectral
deformation. We define elliptical coordinates $(\rho, \vartheta)$
by
$$(x, y) = (c\cosh \rho \cos \vartheta, c \sinh \rho  \sin \vartheta). $$
Here,
$$0 \leq \rho \leq \rho_{\max} = \cosh^{-1} (a/c), \; \;  0 \leq \vartheta
\leq 2 \pi. $$ The coordinates are orthogonal. The  lines $\rho =
{constant}$ are confocal ellipses and the lines $\vartheta = {constant}$ are
confocal hyperbolas. In the special case of the disc, we have $c =
0$, but we assume henceforth that $c \not=0$.

\subsection{ Action variables for the billiard flow}
The billiard flow on the ellipse $E$ is the (broken) geodesic flow of the Hamiltonian $H= {p_x^2 + p_y^2}$ on $T^*E$, which follows straight lines inside $E$ and reflects on $\partial E$ according to equal angle law of reflection. 

Action-angle variables on $T^* E$  are symplectic coordinates in which the billiard flow of the ellipse is given by Kronecker flows on the invariant Lagrangian 
submanifolds. We refer to \cite{Ar} for the general principles and to \cite{Sie97} for the special case of the ellipse. 
Let $p_\rho$ and $p_\vartheta$ be the symplectic dual variables  corresponding to the elliptic coordinates $\rho$ and $\vartheta$, respectively.  The two conserved quantities of the system are the energy (the Hamiltonian) $H$ and the angular Hamiltonian $I$ (which we also call the action), given in the coordinates $(\rho, p_\rho, \vartheta, p_\vartheta)$, by
$$ H= \frac{p_\rho^2 + p_\vartheta^2}{c^2 (\cosh^2 \rho - \cos^2 \vartheta)} \quad \text{and} \quad  I = \frac{p_\vartheta^2 \cosh^2 \rho + p_\rho^2 \cos^2 \vartheta}{p_\rho^2 + p_\vartheta^2}.$$
 In the notation of \cite{Tab}, $$I = \cos^2 \theta \cosh^2 \rho
 + \sin^2 \theta \cos^2 \vartheta,$$
 where $\theta$ is the
angle between a trajectory of the billiard flow and a tangent
vector to the confocal ellipse with parameter $\rho$. Note also that by the notation of \cite{Sie97}, $I = 1+\frac{L_1L_2}{c^2H}$ where $L_1L_2$ is the product of
two angular momenta about the two foci. The values of $I$  are restricted to $$ 0 \leq I \leq \frac{a^2}{c^2} = \cosh^2 ( \rho_\text{max}).$$ The upper limit $I=\cosh^2 ( \rho_\text{max})$ corresponds to the motion along the boundary and the lower limit $I=0$ corresponds to the motion along the minor axis. Moreover, there are two different kinds of motion in the ellipse depending on the sign of $I$. For $1 < I  < \cosh^2 ( \rho_\text{max}) $ the trajectories have a caustic in the form of a confocal ellipse. For $ 0 < I <1$ the caustic of the motion is a confocal hyperbola and the trajectories cross the $x$-axis between the two focal points. Both kinds of motions are separated by a separatrix which consists of orbits with $I = 1$ that go through the focal points of the ellipse.

In terms of $H$ and $I$, the canonical momenta, are given by 
\begin{equation}\label{Momenta} p_\rho^2 = c^2  (\cosh^2 \rho - I) H  \quad \text{and} \quad  p_\vartheta^2 = c^2 ( I - \cos^2 \vartheta ) H. \end{equation}
Therefore, the action variables are 
\begin{align} \label{RadialAction}
 I_\rho & = \frac{1}{2\pi} \int p_\rho \, d \rho = \frac{c \sqrt{H}}{\pi} \int_{\cosh^2 \rho \geq I, 
 	  \rho \geq 0}  \sqrt{\cosh^2 \rho - I} \, d\rho, \\ 
\label{AngularAction} I_\vartheta & = \frac{1}{2\pi} \int p_\vartheta \, d \vartheta = \frac{c \sqrt{H}}{\pi} \int_{\cos^2 \vartheta \leq  I,  0 \leq \phi \leq \pi }  \sqrt{ I - \cos^2 \vartheta } \, d \vartheta. \end{align}
In fact these are the actions for the half-ellipse $ 0 \leq \phi \leq \pi$. The integrals can be calculated in terms of $I$ using elliptic integrals of first and second kind (See \cite{Sie97}). The actions will play a key role in Section \ref{BSQC} in the description of  Bohr-Sommerfeld quantization conditions for the eigenvalues of the Laplacian. 

	\begin{figure}
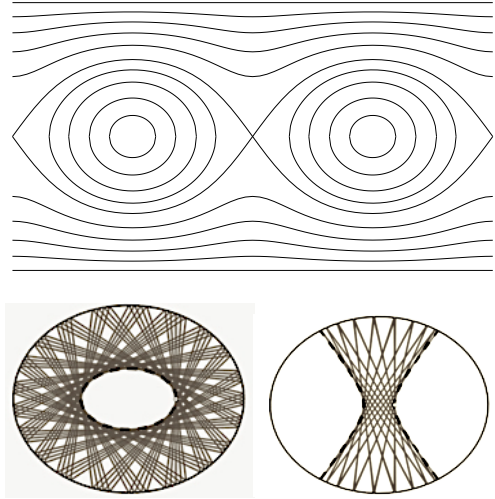
\centering
			\subfloat {\includegraphics[width=.4\linewidth]{InvariantCurves}}\par 
		\subfloat{\includegraphics[width=.2\linewidth]{Caustic-Ellipse}}
		\subfloat {\includegraphics[width=.2\linewidth]{Caustic-Hyperbola}}
	    \caption{Invariant curves and caustics.}
		\label{Invariant curves}
	\end{figure}

\subsection{Billiard map, invariant curves, Leray measure, and action-angle variables}
The billiard map of an ellipse $E$ (or in general any smooth domain) is a cross section to the the billiard flow on $S^*_{\partial E} E$, which we always identify with $B^* \partial E$ and call it the phase space of the boundary. To be precise,
the billiard map $\beta$ is defined on $B^* \partial E$ as follows: given $(s, \eta) \in T^*\partial E$, with $s$ being the arc-length variable measured in the counter-clockwise direction from a fixed point say $s_0$, and $|\eta| \leq 1$, we let $(s, \zeta) \in S^* E$ be the unique inward-pointing unit covector at $s$ which projects to  $(s, \eta)$ under the map $T^*_{\partial E} E \to T^* \partial E$. Then we follow the geodesic (straight line) determined by $(s, \zeta)$ to the first place it intersects the boundary again; let $s' \in \partial E$ denote this first intersection. (If $|\eta| = 1$, then we let $s' = s$.)
 Denoting the inward unit normal vector at $s'$ by $\nu_{s'}$, we let $\zeta' = \zeta + 2 (\zeta \cdot \nu_{s'}) \nu_{s'}$ be the direction of the geodesic after elastic reflection at $s'$, and let $\eta'$ be the projection of $\zeta'$ to $T^*_{s'}Y$. Then we define
 $$
 \beta(s, \eta) = (s', \eta').
 $$

 A theorem of Birkhoff asserts that billiard map preserves the natural symplectic form $ds \wedge d \eta$ on $B^* \partial E$, i.e.
 $$\beta^* ( ds \wedge d \eta) = ds \wedge d \eta.$$  In the literature, the coordinates $(s, \theta)$ are commonly used for phase space of the boundary, where $\theta \in [0, \pi]$ is the angle that $\zeta$ makes with the positive tangent direction at $s$. In these coordinates,
 $$ds \wedge d \eta = \sin \theta \, d \theta \wedge ds $$
 An invariant set in $B^* \partial E$ is a set $C$ such that $\beta(C) = C$. 
 An invariant curve is a curve (connected or not) on the phase space that is invariant. The phase space $B^* \partial E$ of the ellipse $E$ is in fact foliated  with invariant curves. More precisely,
 \begin{lem} \label{ACTION} The invariant curves of the billiard map $\beta: B^*\partial E \to B^*\partial E$ are level sets 
 	of  $I: B^*\partial E \to \R$ defined by,
 	$$I =  \frac{p_{\vartheta}^2}{c^2} + \cos^2
 	\vartheta$$
 \end{lem}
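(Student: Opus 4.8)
The plan is to reduce the lemma to the structure of the conserved quantities of the billiard flow already recorded in the preceding discussion. Recall that on $T^*E$ the billiard flow has the two commuting integrals $H = (p_\rho^2 + p_\vartheta^2)/\big(c^2(\cosh^2\rho - \cos^2\vartheta)\big)$ and $I = \big(p_\vartheta^2\cosh^2\rho + p_\rho^2\cos^2\vartheta\big)/(p_\rho^2+p_\vartheta^2)$, and that the billiard map $\beta$ on $B^*\partial E$ is the first-return (Poincaré) section of this flow on $S^*_{\partial E}E$. Since $I$ is constant along the straight-line segments of the flow and is preserved under the reflection law at the boundary (this is precisely the content of the classical confocal-caustic theorem: a chord of the ellipse and its reflection are tangent to the same confocal conic, i.e. have the same value of $I$), the restriction of $I$ to the cross section $B^*\partial E$ is invariant under $\beta$. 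So the first step is: \emph{identify the restriction of the flow-integral $I$ to $B^*\partial E$ with the function $p_\vartheta^2/c^2 + \cos^2\vartheta$ appearing in the statement.}

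To carry this out, I would evaluate the flow-integral $I$ at a point of $S^*_{\partial E}E$, i.e. on the boundary ellipse $\rho = \rho_{\max}$. On the energy surface we may normalize $H = 1$; then from $\eqref{Momenta}$ we have $p_\rho^2 = c^2(\cosh^2\rho - I)$ and $p_\vartheta^2 = c^2(I - \cos^2\vartheta)$ along any trajectory, and in particular at the boundary. Either substituting these into the formula for $I$ and simplifying, or more directly solving $p_\vartheta^2 = c^2(I - \cos^2\vartheta)$ for $I$, gives $I = p_\vartheta^2/c^2 + \cos^2\vartheta$ at $\rho = \rho_{\max}$. This is exactly the function $I: B^*\partial E \to \R$ in the lemma (here $\vartheta$ is the angular elliptical coordinate, which on the boundary is the natural parametrization, and $p_\vartheta$ is its dual momentum in $T^*\partial E$). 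Thus the boundary function in the lemma is the restriction of the globally conserved $I$, hence is $\beta$-invariant, so each level set $\{I = \alpha\}$ is an invariant set; being a smooth curve for regular values, it is an invariant curve.

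For the converse inclusion — that these level sets are \emph{all} the invariant curves, i.e. that $\beta$ has no other invariant curves — I would invoke the standard fact (see \cite{Sie97,Tab}) that the billiard map of the ellipse is completely integrable with $I$ as a global action-type integral whose level sets foliate $B^*\partial E$; equivalently, one passes to action-angle coordinates $(I_\vartheta, \text{angle})$ in which $\beta$ becomes a (generally irrational) rotation on each level $\{I = \alpha\}$, so no proper subset of a regular level set is invariant and no invariant curve can cross levels. Since this foliation statement is precisely what the surrounding text attributes to \cite{Sie97} and what makes the preceding formulas for the action variables $\eqref{RadialAction}$–$\eqref{AngularAction}$ meaningful, it is legitimate to cite it here.

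The main obstacle is not in the algebra — the identification of $I|_{B^*\partial E}$ with $p_\vartheta^2/c^2 + \cos^2\vartheta$ is a short computation from $\eqref{Momenta}$ — but in being careful about two points: first, that $\beta$-invariance of a flow-integral requires the integral to be preserved by the reflection map and not merely constant along chords (this is the confocal-caustic property and should be stated as such), and second, that the \emph{only if} direction genuinely needs the integrability/foliation input rather than following formally from invariance alone. I would therefore phrase the proof as: (i) $I$ restricted to the boundary equals the stated function (computation); (ii) $I$ is a global integral of the billiard flow invariant under reflection, hence $\beta^* I = I$, so level sets are invariant; (iii) by the integrability of the elliptical billiard (\cite{Sie97}, \cite{Tab}), these exhaust the invariant curves.
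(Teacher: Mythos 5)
Your proposal is correct and follows essentially the same route as the paper, whose entire proof is the one-line observation that the claim follows from the second equation of \eqref{Momenta} together with $H=1$ on $S^*\partial E$ --- exactly your step (i). Your steps (ii) and (iii) (reflection-invariance of $I$ via the confocal-caustic property, and exhaustiveness of the level sets via integrability) are details the paper leaves implicit in the surrounding discussion, and they are stated correctly.
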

\begin{proof} It follows quickly form the second equation of \eqref{Momenta} and that $H=1$ on $S^*\partial E$. \end{proof}
  Although $I_\vartheta$ is the classical angular action on $B^* \partial E$, but we shall call $I$ the action as it is more convenient and is related to $I_\vartheta$ via the one-to-one correspondence \eqref{AngularAction}.  As is evident from the Figure \ref{Invariant curves}, the separatrix curve $I=1$ divides the phase space into two types of open sets,  the exterior corresponding to trajectories with confocal elliptical caustics ($1 < I  < \cosh^2\rho_\text{max}$)  and the interior to trajectories with confocal hyperbolic caustics ($0<I <1$). 
  
  \subsubsection{Leray measure}
 On each level set $I=\alpha$ of $I$, there is a natural measure $d \mu_{\alpha}$ called the Leray measure which in invariant under $\beta$ and the flow generated by $I$.  In the symplectic 
coordinates $(\vartheta, p_{\vartheta})$, and on $I=\alpha$, it is given by
 $$d\mu_{\alpha} = \frac{d
	\vartheta \wedge d p_{\vartheta}}{d I}.$$
 Since $d \vartheta \wedge d I = \frac{\partial I}{\partial p_{\vartheta}} d \vartheta
\wedge d p_{\vartheta}$, we obtain that
\begin{equation} \label{Leray} d\mu_{\alpha} =  \frac{c^2}{2 p_{\vartheta}} \bigg\rvert_{I=\alpha}d \vartheta = \frac{c}{2} (\alpha- \cos^2 \vartheta)_+^{-1/2} d \vartheta. \end{equation}
Here, $x_+=x$ if $x >0$ and is zero otherwise. Up to a scalar multiplication, $d\mu_{\alpha}$ is a unique  measure that is invariant under $\beta$ and the flow of $I$.



\subsubsection{\label{alphaSECT} Action-angle variables and rotation number}

The billiard map has a Birkhoff normal form around each invariant
curve in $B^* \partial E$. That is, in the symplectically dual angle
variable $\iota$ to $I$, the billiard map has the form, $\beta(I,
\iota) = (I, \iota+ r(I))$, where $r$ is often called
the rotation number of the invariant curve. An explicit formula is
given for it in \cite{Tab} (3.5), \cite{CR} (section 4.3 (11)) and
\cite{K}. Then, if $0 < I < 1$,
\begin{equation*} \label{alphaforma} r(I) = \frac{\pi}{2 F(\sqrt{I})} F \left( \arcsin \left(
\frac{2 \tanh (\rho_\text{max}) \sqrt{\cosh^2\rho_\text{max} - I}}{\cosh^2\rho_\text{max} - I +
I \tanh^2\rho_\text{max}} \right), \sqrt{I} \right), \end{equation*} where
$$F(z, k)
 = \int_0^z \frac{d \tau}{\sqrt{1 - k^2 \sin^2 \tau}}, \;\; F(k) = F \left(\frac{\pi}{2}, k \right). $$
Also, if $1 < I <  \cosh^2(\rho_\text{max})$ then
\begin{equation*} \label{alphaform2} r(I) = \frac{\pi}{2 F(1 / \sqrt{I})} F \left(\arcsin \left(
\sqrt{I} \frac{2 \tanh(\rho_\text{max}) \sqrt{\cosh^2\rho_\text{max} - I}}{\cosh^2\rho_\text{max} - I + I \tanh^2\rho_\text{max}} \right), \frac{1}{\sqrt{I}} \right).
\end{equation*}

\begin{defn} \label{ACTROTDEF} We define the range of the action variable $I$ as the {\em action interval}, i.e. the interval $[0,  \cosh^2(\rho_\text{max})]$, and the range of $r(I)$ as the {\em rotation interval}. \end{defn}

\section{\label{SEP}Quantum elliptical billiard}

The Helmholtz equation in elliptical coordinates takes the form,
\begin{equation} \label{EQUIV} -\left (\frac{\partial^2}{\partial \rho^2} +
\frac{\partial^2}{\partial \vartheta^2} \right) \phi =  \lambda^2 c^2
(\cosh^2 \rho - \cos^2 \vartheta) \phi.
\end{equation}
The quantum integrability of $\Delta$ owes to the fact that this equation
 is separable. We put
\begin{equation} \label{PROD1} \phi( \rho, \vartheta) = F(\rho) G(\vartheta) , \end{equation}
and separate variables to get the coupled Mathieu equations,

\begin{equation} \label{MATHIEU} \left\{ \begin{array}{ll} \frac{{ \hbar^2}}{c^2} \, F''(\rho) +  \cosh^2\rho \, F (\rho) = \alpha F(\rho) & \rm{DBC} \; (\rm{resp.} \,NBC),
\\& \\
-\frac{\hbar^2}{c^2} \, G''(\vartheta) +  \cos^2 \vartheta \, G(\vartheta) = \alpha G(\vartheta) & \rm{PBC}. \end{array} \right.
\end{equation}
where $\hbar =  \lambda ^{-1}$ and $\alpha$ is the separation
constant. Here, `PBC' stands for `periodic boundary conditions'; DBC (resp. NBC) stands for Dirichlet (resp. Neumann) boundary conditions. Thus, we consider pairs $(\hbar, \alpha)$ where there exists
a smooth solution of the two boundary problems. 

Each of the angular and radial equations above is an eigenvalue problem for a \textit{semiclassical Schr\"odinger operator} with boundary conditions on a finite interval. These commuting operators are given by
\begin{align} \label{J}  Op_\hbar(J); \quad  J &= -{p_\rho^2}/{c^2} + \cosh^2(\rho),     \\   \label{I} Op_\hbar(I); \quad I & = {p_\vartheta^2}/{c^2} + \cos^2(\vartheta). \end{align}
The boundary conditions on $F$ take the form,
\begin{equation} \label{BP}  \;\;F(\rho_{\max}) = 0\;\;\;\;(\mbox{Dirichlet}),
\;\;\; F'(\rho_{\max})= 0\;\;\; (\mbox{Neumann}). \end{equation}
As $G(-\vartheta)$ is a solution whenever $G(\vartheta)$ is, we restrict our attention to $2 \pi$-periodic solutions to the angular equation which are either even or odd. One can then see that:

\begin{rema}\label{Smoothness between foci} In order to obtain solutions well-defined on the line segment joining the foci, i.e. at $\rho=0$, solutions to the radial equation must satisfy the boundary condition
$F'(0) = 0$ in case the solution $G$ is even and $F(0) = 0$ in case $G$ is odd. In these cases the solutions $F$ are also respectively even and odd functions.	\end{rema}
\subsection{Mathieu and modified Mathieu characteristic numbers} \label{Mathieu and modified Mathieu}For each fixed $\hbar$, the angular problem is a Sturm-Liouville problem and thus there exist real valued sequences $\{a'_n(\hbar)\}_{n=0}^\infty$ and $\{b'_n(\hbar)\}_{n=1}^\infty$ so that it has $2 \pi$-periodic non-trivial solutions - even solutions if $\alpha = a_n(\hbar)$ and odd solutions if $\alpha = b_n(\hbar)$. Here even or odd is with respect to $\vartheta \to - \vartheta$, or equivalently $y \to -y$. We represent the corresponding solutions by $G_n^{\text{e}}(\vartheta, \hbar)$ and $G_n^{\text{o}}(\vartheta, \hbar)$, respectively. The even indices correspond to $\pi$-periodic solutions, thus they must be invariant under $\vartheta \to \pi - \vartheta$, or equivalently be even with respect to $x \to -x$. Solutions with odd indices have anti-period $\pi$ and correspond to odd solutions in the $x$ variable. The sequences $a'_n(\hbar)$ and $b'_n(\hbar)$ are related to the standard \textit{Mathieu characteristic numbers of integer orders} $a_n(q)$ and $b_n(q)$ by 
\begin{equation} \label{aa'relation} a'_n(\hbar) = \frac{1}{2} + \frac{a_n(q)}{4q}, \quad  b'_n(\hbar) = \frac{1}{2} + \frac{b_n(q)}{4q}, \quad q = \frac{c^2}{4 \hbar^2}. \end{equation}
Thus using the wellknown properties of $a_n$ and $b_n$, for $\hbar >0$ we have
\begin{equation}\label{anbn} a'_0 (\hbar) < b'_1(\hbar) < a'_1 (\hbar) < b'_2(\hbar) < a'_2(\hbar) < b'_3(\hbar) <  \cdots,\end{equation}
\begin{equation} b'_{n+1}(\hbar) - a'_n(\hbar) = {\mathcal O}_n (e^{-C/\hbar}), \quad C>0.  \end{equation}  
The sequence \eqref{anbn} is precisely the spectrum of the angular Schr\"odinger operator on the flat circle $\R / (2\pi\Z)$.

Similarly for the radial problem (say with Dirichlet boundary condition $F(\rho_{\max}) = 0$), for each $\hbar$, there exist sequences $\{A'_m (\hbar)\}_{m=0}^\infty$ and $\{B'_m(\hbar)\}_{m=1}^\infty$ such that the radial problem has a non-trivial even solution $F_m^{\text{e}}(\rho,\hbar)$ if $\alpha = A'_m(\hbar)$, and a odd solution
$F_m^{\text{o}}(\rho, \hbar)$ if $\alpha = B'_m(\hbar)$. The sequences of $A'_m(\hbar)$ and $B'_m(\hbar)$ are related to \textit{modified Mathieu characteristic numbers} $A_m(q)$ and $B_m(q)$ (See \cite{Ne}) by the same relations as in \eqref{aa'relation}. They form the spectrum of the radial semiclassical Schr\"odinger operator on the interval $[- \rho_{\text{max}},  \rho_{\text{max}}]$ with Dirichlet boundary condition and satisfy
\begin{equation} A'_0 (\hbar) < B'_1(\hbar) < A'_1 (\hbar) < B'_2(\hbar) < A'_2(\hbar) < B'_3(\hbar) <  \cdots. \end{equation}

\subsection{Eigevalues of $E$: Intersection of Mathieu and modified Mathieu curves} \label{eigenvalues}

In order to find eigenfunctions of the ellipse $E$ one has to search specific values of $\hbar$ such that both radial and angular Sturm-Liouville problems possess non-trivial solutions for the same value of $\alpha$. By Remark \ref{Smoothness between foci}, we only consider the separable solutions 
$$ F_m^{\text{e}}(\rho, \hbar) G_n^{\text{e}}(\vartheta, \hbar) \quad \text{and} \quad F_m^{\text{o}}(\rho, \hbar) G_n^{\text{o}}(\vartheta, \hbar).$$ Thus the frequencies of $E$ with Dirichlet boundary condition\footnote{In the Neumann case, $A_n$ and $B_m$ are different from the ones for the Dirichlet case.} are of the form 
$$\lambda_{mn}^{\text{e}} = \frac{1}{\hbar_{mn}^{\text{e}}} \quad \text{and} \quad \lambda_{mn}^{\text{o}}=\frac{1}{\hbar_{mn}^{\text{o}}},$$
where $\hbar_{mn}^{\text{e}}$ and $\hbar_{mn}^{\text{o}}$ are, respectively, solutions to
\begin{equation}\label{Intersection} a'_n (\hbar) = A'_m(\hbar) \quad \text{and} \quad b'_n (\hbar) = B'_m(\hbar).\end{equation} 
The existence of the point of intersection of the curves $a'_n(\hbar)$ with $A'_m(\hbar)$, and $b'_n(\hbar)$ with $B'_m(\hbar)$ are guaranteed by:
\begin{theo}[Neves \cite{Ne}] For each $(m, n)$, there is a unique positive solution $q$ to each of the equations  $a_n (q) = A_m(q)$ and $b_n (q) = B_m(q)$.
	\end{theo}
Hence the same statement holds for the equations \eqref{Intersection} by the correspondence $\eqref{aa'relation}$.  The frequencies $\lambda_j$ of $E$ are obtained by sorting $\{\lambda_{mn}^{\text{e}}, \lambda_{mn}^{\text{o}}; (m, n) \in \mathbb N^2 \}$ in increasing order.

\subsection{\label{SYMCLASSES} Symmetries classes}

The irreducible representations of the $\Z_2 \times \Z_2$ symmetry group are real one-dimensional spaces, so that 
 there exists an orthonormal basis of  eigenfunctions of the ellipse  which are even or odd with respect
to each $\Z_2$ symmetry, i.e. have one of the four symmetries $$(\rm{even}, \rm{even}), (\rm{even}, \rm{odd}), (\rm{odd}, \rm{even}), (\rm{odd}, \rm{odd}),$$ where the first and the second entries correspond to symmetries with respect to $x \to -x$ and $y \to -y$, respectively. 
Given the above discussion the symmetric eigenfunctions are:
\begin{equation} \label{SeparatedEigenfunctions}\left\{ \begin{array}{llll}
(\rm{even}, \rm{even}): & \phi_{m, 2k}^{\text{e}} &= F^{\text{e}}_{m}(\rho, \hbar) G^{\text{e}}_{2k}(\vartheta, \hbar); \quad & \hbar=\hbar_{m, 2k}^{\text{e}}, \\  \\
(\rm{even}, \rm{odd}): & \phi_{m, 2k}^{\text{o}} &= F^{\text{o}}_{m}(\rho, \hbar) G^{\text{o}}_{2k}(\vartheta, \hbar); \quad  & \hbar=\hbar_{m, 2k}^{\text{o}}, \\ \\
(\rm{odd}, \rm{even}): & \phi_{m, 2k+1}^{\text{e}}& = F^{\text{e}}_{m}(\rho, \hbar) G^{\text{e}}_{2k+1}(\vartheta, \hbar); \quad  & \hbar=\hbar_{m, 2k+1}^{\text{e}},\\ \\
(\rm{odd}, \rm{odd}):  & \phi_{m, 2k+1}^{\text{o}} &= F^{\text{o}}_{m}(\rho, \hbar) G^{\text{o}}_{2k+1}(\vartheta, \hbar); \quad  &\hbar= \hbar_{m, 2k+1}^{\text{o}}. \end{array} \right. \end{equation}

\begin{figure}
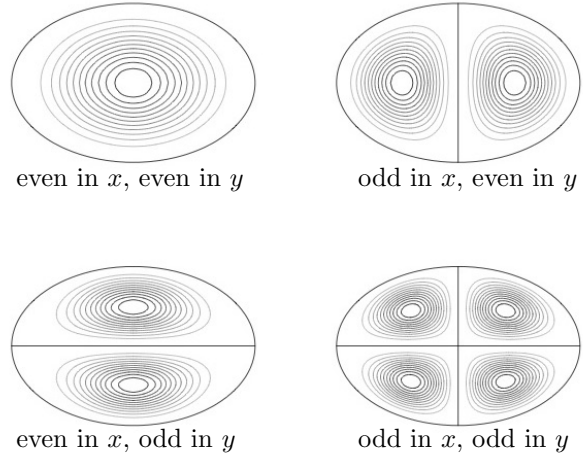
 
	\begin{overpic}[width=0.5\textwidth,tics=10]{EllipseSymmetries.jpg}
		\put (5,40) {\small even in $x$, even in $y$}
		\put (60,40) {\small odd in $x$, even in $y$}
		\put (5, -2) {\small even in $x$, odd in $y$}
		\put (60, -2) {\small odd in $x$, odd in $y$}
	\end{overpic}
	\caption{Symmetries classes of Dirichlet eigenfunctions corresponding to the first four eigenvalues, shown by their probability densities.} \label{SymmetryClasses}
\end{figure}

Figure \ref{SymmetryClasses} shows the symmetries classes of eigenfunctions distinguished by their probability densities. 
It is possible that two symmetric eigenfunctions correspond to the same eigenvalue, or it is possible that
they correspond to different eigenvalues.  

\subsection{Semiclassical actions and Bohr-Sommerfeld quantization conditions for the ellipse} \label{BSQC}
 Graphs of the one-dimensional classical potentials are given in \cite[Figure 1]{WWD}. The potential $-\cosh^2 \rho$ for $Op_\hbar(J)$ in \eqref{J} is a potential barrier with a single local maximim which is symmetric around the vertical line through the local maximum.  The classical potential $\cos^2 \vartheta$ underlying $Op_\hbar(I)$ in \eqref{I} is a double-well potential on the circle. Thus, there exists a separatrix curve corresponding to the two local maxima of the potential,
which divides the two-dimensional phase space into two regions. Inside the phase space curve, the level sets of the potential
are `circles' paired by the left right symmetry across the vertical line through the local maximum at $\pi$. Outside the separatrix,
the level sets have non-singular projections to the base, i.e. are roughly horizontal. 

As will be seen below, the  Bohr-Sommerfeld levels
inside the separatrix are invariant under the up-down symmetry and
have two components exchanged by the left-right symmetry. The levels
outside the separatrix are invariant under the left-right symmetry and
are exchanged under the up-down symmetry.

It is more important for our purposes to determine the lattice of semi-classical eigenvalues in terms of classical and quantum action variables.  
 The WKB (or EKB) quantization for the actions are given in \cite[(33)]{Sie97} (see also \cite{KR60} for the original reference). Up to $\mathcal O (\hbar^2)$ terms they have the form:
 $$ \text{Odd in $y$} \, \begin{cases} \begin{array}{lllll} I > 1: &  I_\rho = (m + \frac{3}{4}) \hbar, & I_\vartheta = (n + 1)\hbar, & m, n = 0,1,2, \dots, \\ &&&&\\
 I < 1: &  I_\rho = (m + 1)\hbar, & I_\vartheta = (n + \half)\hbar, & m, n = 0,1,2, \dots,   \end{array} \end{cases} $$
 $$ \text{Even in $y$}\, \begin{cases} \begin{array}{lllll} I > 1: &  I_\rho = (m + \frac{3}{4})\hbar, & I_\vartheta = n \hbar, & m, n = 0,1,2, \dots, \\&&&&\\
  I < 1: &  I_\rho = (m + \half)\hbar, & I_\vartheta = (n + \half)\hbar , & m, n = 0,1,2, \dots. \end{array} \end{cases} $$
  There is a discontinuity at $I =1$ due to the separatrix curve, but it is not important for our problem and we ignore it.



\subsubsection{Semiclassical action} In fact for each of the eight Bohr-Sommerfeld quantization condition above there is a version which is valid to all orders in $\hbar$ which are essentially given by the quantum Birkhoff normal form around each orbit under consideration. To be precise, there exist eight so called semiclassical actions
$$S_\hbar^{e/o, 1^\pm, \rho/\vartheta}(\alpha), $$
where the choices of $e/o$ corresponds to even or odd in the $y$ (equivalently in the $\vartheta$) variable, of $1^+$ or $1^-$ to $I >1$ or $I <1$, and $\rho$ or $\vartheta$ to actions in the $\rho$ or $\vartheta$ variable, respectively.  Each of the eight semiclassical actions has an $\hbar$ asymptotic expansion of the form
$$S_\hbar(\alpha) = S_0(\alpha) + \hbar S_1(\alpha) + \hbar^2 S_2(\alpha) + \cdots,$$ 
where $S_0(\alpha)$ is the corresponding classical action which is $I_\rho|_{I = \alpha}$  for $ S^{e/o, 1^\pm, \rho}_\hbar$ and $I_\vartheta|_{I=\alpha} $ for $ S_\hbar^{e/o, 1^\pm, \vartheta}$. See equations \eqref{AngularAction} and \eqref{RadialAction} for formulas for the classical actions in terms of $I=\alpha$. Then the Bohr-Sommerfeld Quantization Conditions (BSQC) to all orders are given by 
\begin{align} 
\label{BSQC outside radial} S_\hbar^{e/o, 1^+, \rho}(\alpha^{e/o, 1^+, \rho}_m(\hbar)) &=  m \hbar, \quad &\text{valid uniformly for $ \alpha \in [1+\epsilon, \cosh^2{\rho_{\max}} -\epsilon  ] $}, \\
\label {BSQC outside angular} S_\hbar^{e/o, 1^+, \vartheta}(\alpha^{e/o, 1^+, \vartheta}_n(\hbar))  &= n\hbar, \quad &\text{valid uniformly for $ \alpha \in [1+\epsilon, \cosh^2{\rho_{\max}} -\epsilon  ] $} \\  
S_\hbar^{e/o, 1^-, \rho}(\alpha^{e/o, 1^-, \rho}_m(\hbar)) &=  m \hbar, \quad &\text{valid uniformly for $ \alpha \in [\epsilon, 	1 -\epsilon  ] $}, \\
S_\hbar^{e/o, 1^-, \vartheta}(\alpha^{e/o, 1^-, \vartheta}_n(\hbar)) & = n\hbar, \quad &\text{valid uniformly for $\alpha \in [\epsilon, 1 -\epsilon  ] $}, \end{align}
where $\epsilon >0$ is arbitrary, however the remainder estimates in the asymptotic expansions depend on $\epsilon$. There are versions of BSQC in the literature that are valid uniformly near the separatrix but we do not need it here. 
We also point out that the Maslov indices are not ignored but absorbed in the corresponding subleading terms $S_1(\alpha)$. 

\begin{rema} By our notations of Section \ref{Mathieu and modified Mathieu} on the Mathieu and modified Mathieu characteristic values, away from the separatrix level we have,
	$$ \{ \alpha^{e, 1^\pm, \rho}_m(\hbar); \; m=0, 1, 2, \cdots \}  = \{ A_m'(\hbar): \; m =0, 1, \cdots \}, $$
		$$ \{ \alpha^{o, 1^\pm, \rho}_m(\hbar); \;  m=0, 1, 2, \cdots \}  = \{ B'_m(\hbar): \; m =1, 2, \cdots \}, $$
			$$ \{ \alpha^{e, 1^\pm, \vartheta}_n(\hbar); \; n=0, 1, 2, \cdots \}  = \{ a'_n(\hbar): \; n =0, 1, \cdots \}, $$
				$$ \{ \alpha^{o, 1^\pm, \vartheta}_n(\hbar); \; n=0, 1, 2, \cdots \}  = \{ b'_n(\hbar): \; n =1, 2, \cdots \}. $$
	\end{rema}
The eigenvalues of $E$ are determined by intersecting the above analytic curves as follows:
\begin{equation}\label{Intersection of alpha}  \alpha^{e, 1^\pm, \rho}_m(\hbar) =   \alpha^{e, 1^\pm, \vartheta}_n(\hbar), \end{equation}
$$ \alpha^{o, 1^\pm, \rho}_m(\hbar) =   \alpha^{o, 1^\pm, \vartheta}_n(\hbar),$$
the solutions of which are precisely $\hbar^{e}_{mn}$ and $\hbar^{o}_{mn}$, respectively, that we introduced in Section \ref{eigenvalues}.

\subsection{\textbf{Keller-Rubinow algorithm}} \label{Keller-Rubinow algorithm}  In this section we explore the procedure of finding $\hbar^e_{mn}$ corresponding to eigenvalues associated to invariant curves outside the separatrix (i.e. $1^+$ case) whose eigenfunctions are even in the $\vartheta$ variable. All other cases follow a similar procedure and we shall drop the superscripts for convenience. 

We are in search of solutions to equation \eqref{Intersection of alpha} which, in our convenient notation, are given by
\begin{equation} \label{alphaeq} \alpha^{\rho}_m(\hbar) =   \alpha^{\vartheta}_n(\hbar), \end{equation}
where the left and the right hand sides satisfy the BSQC \eqref{BSQC outside radial} and \eqref{BSQC outside angular},
\begin{equation} \label{Angular and Radial BSQC}S_\hbar^{\rho}(\alpha^{ \rho}_m(\hbar)) =  m \hbar, \quad S_\hbar^{\vartheta}(\alpha^{ \vartheta}_n(\hbar)) = n\hbar, \end{equation} respectively. 
Following \cite{KR60}, we divide these two equations to obtain,
\begin{equation} \label{Equation of intersection} A_\hbar(\alpha): =  \frac{S_\hbar^{\rho}(\alpha)}{ S_\hbar^{\vartheta}(\alpha)}  = \frac{I_\rho(\alpha) -\frac{3}{4} \hbar + \sum_{k=2}^\infty S^\rho_k(\alpha) \hbar^k }{I_\vartheta(\alpha) +  \sum_{k=2}^\infty S^\vartheta_k(\alpha) \hbar^k} = \frac{m}{n}. \end{equation}
The expression $A_\hbar(\alpha)$ has a classical $\hbar$ expansion with principal term 
\begin{equation} \label{A_0} A_0(\alpha):= \frac{I_\rho(\alpha)}{I_\vartheta (\alpha)}, \end{equation}
which is a positive monotonic function on the interval $[1, \cosh^2 \rho_{\max}]$ (See \cite{KR60}, page 41). Hence, if we choose $r$ in the range of $A_0(\alpha)$ on the domain $[1+2\epsilon, \cosh^2 \rho_{\max} -2 \epsilon]$, then for $\hbar$ sufficiently small there is a unique solution $\alpha$ to the equation $A_\hbar(\alpha)=r$ in the slightly larger interval $[1+\epsilon, \cosh^2 \rho_{\max} -\epsilon]$, accepting an $\hbar$ expansion of the form:
\begin{equation} \label {alpha h r} \alpha \left (\hbar, r \right ) = \sum_{k=0}^\infty \alpha_{(k)} (r) \hbar^k.\end{equation}
 It is manifestly the the inverse function of $A_h(\alpha)$ and its formal power series coefficients $\alpha_{(k)}(r)$ are smooth functions of $r$. The principal term $\alpha_{(0)}$ is the inverse function of $A_0(\alpha)$. By this definition, the solution to \eqref{Equation of intersection} is $ \alpha ( \hbar, m/n)$ whenever $m/n$ belongs to  $A_0 [1+2\epsilon, \cosh^2 \rho_{\max} - 2\epsilon]$, which is a  bounded  closed interval in $(0, \infty)$. In particular $m/n$ is bounded above and below by positive constants $K_1$ and $K_2$:
 \begin{equation} \label{sector}(m, n) \in \mathbb N^2: \quad K_1 \leq \frac{m}{n}  \leq K_2. \end{equation}
 This is the eligible sector of lattice points for our eigenvalue problem outside the separatrix. 
  Plugging $\alpha(\hbar, m/n)$ into the angular BSQC, i.e. the second equation of \eqref{Angular and Radial BSQC}, (the radial one follows immediately from the angular one and \eqref{Equation of intersection}), we arrive at the quantization condition for the eigenvalues of $E$:
\begin{equation} Q(\hbar, m, n):=  \frac{1}{n} S^\vartheta_\hbar ( \alpha(\hbar, m/n) ) = \hbar. \end{equation}
We claim that for $m$ and $n$ sufficiently large, this equation has a unique solution $\hbar_{mn}$ in a sufficiently small interval $[0, \hbar_0]$, or equivalently the function $Q( \cdot, m, n)$ has a unique fixed point.   Now, since
$$ Q(0, m, n) = \frac{I_\vartheta(\alpha_{(0)}(m/n))}{n} , \quad \frac{\d Q}{\d \hbar}(0, m, n) = 0,$$
for $\hbar_0$ sufficiently small, and $n$ sufficiently large $Q( \cdot , m, n)$ maps $[0, \hbar_0]$ into itself and $\frac{\d Q}{\d \hbar}(\hbar, m, n) <\frac12$ in this interval. The claim follows by the Banach contraction principle.

\begin{rema} \label{alpha at rationals}
Since there are many functions $\alpha$ used, it is important to highlight their relations and differences. If we evaluate $\alpha \left (\hbar, r \right )$ defined in \eqref{alpha h r}, at $\hbar = \hbar_{m,n}$ and $r = \frac{m}{n}$, we get the common value of \eqref{alphaeq}. In short,
$$\alpha \left (\hbar_{mn}, \frac{m}{n} \right ) =  \alpha^{\rho}_m(\hbar_{mn}) =   \alpha^{\vartheta}_n(\hbar_{mn}).$$
We also note that the function $\alpha_{(0)}(r)$, with parentheses around $0$,  is the principal term of $\alpha(\hbar, r)$ and should not be confused with $\alpha^\rho_0(\hbar)$ or $\alpha_0^\vartheta(\hbar)$. 
\end{rema}

 In fact, the above procedure provides an asymptotic for $\lambda_{mn} = 1/ \hbar_{mn}$ and
gives a sharper result than previously known:
\begin{prop} \label{INVCURVESOUT} The frequencies $\lambda^{e/o}_{mn}$ of $E$ associated to invariant curves outside the separatrix curve, and $\epsilon$ away from it, correspond to lattice points $(m, n) \in \mathbb N^2$ in the sector
	$$ \min \left \{ \frac{I_\rho(\alpha)}{I_\vartheta(\alpha)}; \; \alpha \in [1+\epsilon, \cosh^2 \rho_{\max} - \epsilon] \right \}    \leq \frac{m}{n}  \leq  \max \left \{ \frac{I_\rho(\alpha)}{I_\vartheta(\alpha)}; \; \alpha \in [1+\epsilon, \cosh^2 \rho_{\max} - \epsilon] \right \} ,$$
	and satisfy the asymptotic property,
	$$ \lambda^{e/o}_{mn} =\frac{n}{I_\vartheta(\alpha_{(0)}(m/n))} + \mathcal O\left (\frac{1}{n} \right).$$
	The same asymptotic formula holds for the frequencies $\lambda^{e/o}_{mn}$  associated to invariant curves inside the separatrix curve, except in this case the sector of lattice points is:
	$$ \min \left \{ \frac{I_\rho(\alpha)}{I_\vartheta(\alpha)}; \; \alpha \in [\epsilon, 1 - \epsilon] \right \}    \leq \frac{m}{n}  \leq  \max \left \{ \frac{I_\rho(\alpha)}{I_\vartheta(\alpha)}; \; \alpha \in [\epsilon, 1- \epsilon] \right \} ,$$

\end{prop}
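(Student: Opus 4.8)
\textbf{Proof plan for Proposition \ref{INVCURVESOUT}.}

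The plan is to assemble the proposition directly from the Keller--Rubinow construction already carried out in Section \ref{Keller-Rubinow algorithm}, treating the four symmetry classes and the two regions (inside/outside the separatrix) uniformly. First I would fix one case, say the $(e, 1^+)$ case worked out above, and recall that for each lattice point $(m,n)$ with $m/n$ in the interior sector $A_0[1+2\epsilon, \cosh^2\rho_{\max}-2\epsilon]$ the equation $Q(\hbar, m, n) = \hbar$ has a unique solution $\hbar_{mn}\in[0,\hbar_0]$ for all $n$ sufficiently large, by the Banach contraction argument. The eigenvalue is $\lambda_{mn} = 1/\hbar_{mn}$, and the eligible sector is exactly \eqref{sector}; letting $\epsilon\to 0$ in the bookkeeping (or rather running the construction with $\epsilon$ in place of $2\epsilon$) gives the stated sector $\min\{I_\rho(\alpha)/I_\vartheta(\alpha)\}\le m/n\le\max\{\cdots\}$ over $\alpha\in[1+\epsilon,\cosh^2\rho_{\max}-\epsilon]$, since $A_0(\alpha) = I_\rho(\alpha)/I_\vartheta(\alpha)$ by \eqref{A_0} and this function is continuous and monotone on that interval.

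Next I would extract the asymptotic. From the fixed-point equation $\hbar_{mn} = Q(\hbar_{mn}, m, n) = \frac{1}{n} S^\vartheta_{\hbar_{mn}}(\alpha(\hbar_{mn}, m/n))$ and the expansion $S^\vartheta_\hbar(\alpha) = I_\vartheta(\alpha) + \mathcal{O}(\hbar^2)$ together with $\alpha(\hbar, m/n) = \alpha_{(0)}(m/n) + \mathcal{O}(\hbar)$, one gets $n\hbar_{mn} = I_\vartheta(\alpha_{(0)}(m/n)) + \mathcal{O}(\hbar_{mn})$. Since $\hbar_{mn}$ is bounded above (it lies in $[0,\hbar_0]$) and, more precisely, $\hbar_{mn} = \Theta(1/n)$ because $I_\vartheta(\alpha_{(0)}(m/n))$ is bounded above and below by positive constants on the compact sector, we may solve: $\hbar_{mn} = \frac{1}{n}I_\vartheta(\alpha_{(0)}(m/n)) + \mathcal{O}(1/n^2)$, hence
$$\lambda_{mn} = \frac{1}{\hbar_{mn}} = \frac{n}{I_\vartheta(\alpha_{(0)}(m/n))}\Big(1 + \mathcal{O}(1/n)\Big)^{-1} = \frac{n}{I_\vartheta(\alpha_{(0)}(m/n))} + \mathcal{O}\!\left(\frac{1}{n}\right),$$
which is the claimed formula. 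The superscript $e/o$ is handled by noting that the contraction argument and the expansions $S^\vartheta_\hbar = I_\vartheta + \mathcal{O}(\hbar^2)$, $S^\rho_\hbar = I_\rho - \tfrac34\hbar + \mathcal{O}(\hbar^2)$ hold verbatim in all four symmetry classes (the symmetry only changes the Maslov-type constant absorbed into $S_1$, not $S_0 = I_\vartheta$ or the structure of the argument), so the same asymptotic holds for $\lambda^e_{mn}$ and $\lambda^o_{mn}$.

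Finally, the inside-the-separatrix case ($1^-$) is identical except that the monotone function whose range determines the sector is $A_0(\alpha) = I_\rho(\alpha)/I_\vartheta(\alpha)$ restricted to $[\epsilon, 1-\epsilon]$, using the BSQC valid uniformly for $\alpha\in[\epsilon, 1-\epsilon]$; one checks as in \cite{KR60} that $A_0$ is again positive and monotone there, so the same inversion $\alpha(\hbar, r)$ and the same contraction argument apply, yielding the stated sector and the same $\lambda_{mn} = n/I_\vartheta(\alpha_{(0)}(m/n)) + \mathcal{O}(1/n)$. The main obstacle, and the only place requiring care, is the uniformity of all error terms in $n$ (equivalently in $\hbar$) across the compact sector: one must check that the $\mathcal{O}(\hbar^2)$ remainders in the semiclassical actions, the implied constant in $\alpha(\hbar,r) = \alpha_{(0)}(r) + \mathcal{O}(\hbar)$, and the contraction constant $<\tfrac12$ are all uniform for $r = m/n$ ranging over the closed sub-sector and $\alpha$ ranging over $[1+\epsilon,\cosh^2\rho_{\max}-\epsilon]$ (resp. $[\epsilon,1-\epsilon]$) --- this is exactly why the proposition is stated $\epsilon$ away from the separatrix, where these uniform estimates are available.
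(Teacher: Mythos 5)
Your overall route is the same as the paper's: Proposition \ref{INVCURVESOUT} is presented there as a direct consequence of the Keller--Rubinow algorithm of Section \ref{Keller-Rubinow algorithm} (the sector from the range of $A_0=I_\rho/I_\vartheta$, the inversion $\alpha(\hbar,r)$, and the fixed point of $Q(\cdot,m,n)$), and your assembly of those pieces, including the remarks on uniformity in the compact sub-sector and the uniform treatment of the four symmetry classes, is the intended argument.

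However, there is a genuine gap in your derivation of the remainder. From $n\hbar_{mn}=I_\vartheta(\alpha_{(0)}(m/n))+\mathcal O(\hbar_{mn})$ you only get $\hbar_{mn}=\tfrac1n I_\vartheta(\alpha_{(0)}(m/n))+\mathcal O(n^{-2})$, and then
$$\lambda_{mn}=\frac{n}{I_\vartheta(\alpha_{(0)}(m/n))}\bigl(1+\mathcal O(n^{-1})\bigr)^{-1}=\frac{n}{I_\vartheta(\alpha_{(0)}(m/n))}+\mathcal O(1),$$
not $+\,\mathcal O(n^{-1})$: the relative error $\mathcal O(n^{-1})$ multiplies a main term of size $n$, so your last equality is an arithmetic slip. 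To obtain the stated $\mathcal O(n^{-1})$ you need the sharper fact, asserted in Section \ref{Keller-Rubinow algorithm}, that $\tfrac{\partial Q}{\partial\hbar}(0,m,n)=0$, i.e.\ that the coefficient of $\hbar^1$ in the expansion of $S^\vartheta_\hbar(\alpha(\hbar,m/n))$ vanishes (this requires tracking the interplay between $S_1^\vartheta$, the Maslov shift $-\tfrac34\hbar$ in $S^\rho_\hbar$, and the first correction $\alpha_{(1)}(r)$ in $\alpha(\hbar,r)=\alpha_{(0)}(r)+\hbar\,\alpha_{(1)}(r)+\cdots$, not just the crude bound $\alpha(\hbar,r)=\alpha_{(0)}(r)+\mathcal O(\hbar)$). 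With that cancellation one gets $\hbar_{mn}=Q(0,m,n)+\mathcal O(\hbar_{mn}^2/n)=\tfrac1n I_\vartheta(\alpha_{(0)}(m/n))+\mathcal O(n^{-3})$, whence $\lambda_{mn}=n/I_\vartheta(\alpha_{(0)}(m/n))+\mathcal O(n^{-1})$ as claimed. You should either verify this vanishing in each symmetry class or weaken the remainder; as written, your argument proves only the $\mathcal O(1)$ version of the asymptotic.
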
 
The effects of even/odd are only reflected in the remainder term $\mathcal O (1/n)$, which in addition depends on the distance $\epsilon$ from the separatix. Note that the explicit formulas for  $I_\vartheta$ and $I_\rho$ (hence for $\alpha_{(0)}$) in terms of elliptic integrals are different for the inside and outside the separatrix curve (See for example \cite{Sie97}).

\section{Localization of boundary values of separable eigenfunctions on invariant curves. Proof of Theorem \ref{LOCAL}}

In this section, we relate semi-classical asymptotics of eigenfrequencies $\lambda^{\text{e/o}}_{m, n} = 1 / \hbar^{\text{e/o}}_{m, n}$ and of the associated separated eigenfunctions $\phi^{\text{e/o}}_{m, n}$ defined by \eqref{SeparatedEigenfunctions} along `ladders' or `rays' in the action lattice $(m, n) \in \mathbb N^2$. In particular, different rays correspond to different invariant Lagrangrian
submanifolds for the billiard flow. It is simpler to use the billiard map and then to relate rays in the joint spectrum to invariant curves
for the billiard map. Given an invariant curve, inside or outside the separatrix, we wish to find a ray in the joint spectrum for which
the associated eigenfunctions concentrate on the curve. 
Since the WKB method is highly developed in dimension one, it suffices for our purposes to locate the ray in $\mathbb N^2$ which corresponds
to the invariant curve.  The corresponding eigenfunctions will then concentrate on the corresponding Lagrangian submanifolds.

\begin{prop} \label{modified u} Let $\phi^{e/o}_{m, n}(\rho, \vartheta)$  be a separable Dirichlet (resp.  Neumann)
eigenfunction defined in \eqref{SeparatedEigenfunctions}.  Then the `modified boundary
trace'
$${u_{m,n}^{e/o}}(\vartheta)= \left\{ \begin{array}{ll}  \phi^{e/o}_{m, n}(\rho, \vartheta)|_{\rho =\rho_{\max}},
& \mbox{Neumann}, \\ & \\
\frac{1}{\lambda_{mn}^{e/o}}  \frac{\partial \phi^{e/o}_{m, n}(\rho, \vartheta)}{\partial \rho}|_{\rho =\rho_{\max}}, & \mbox{Dirichlet}.
\end{array} \right.$$ is an eigenfunction of the angular Schr\"odinger operator
 $\{Op_\hbar(I)\}_{\hbar =\hbar^{\text{e/o}}_{m, n}} $, whose eigenvalue $\alpha$ is determined by
\begin{equation} \label{alphaform} \frac{\langle Op_\hbar(I) u_{m,n}^{e/o} , u_{m,n}^{e/o}
\rangle_{L^2(\partial E)}}{\langle u_{m,n}^{e/o} , u_{m,n}^{e/o} \rangle_ {L^2(\partial E)}}, \end{equation} 
which is $ \alpha^{e/o, 1^+, \vartheta}_n(\hbar)$ if it is $>1$ and  $\alpha^{e/o, 1^-, \vartheta}_n(\hbar)$ if it is $<1$. 
\end{prop}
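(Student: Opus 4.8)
\emph{Proof sketch.} The plan is to read the conclusion off directly from the separation of variables \eqref{PROD1}--\eqref{MATHIEU}: the modified boundary trace of a separable eigenfunction is, up to a nonzero scalar, exactly its angular factor, which by construction solves the angular Mathieu equation and is therefore an eigenfunction of $Op_\hbar(I)$. First I would unwind the definitions. By \eqref{SeparatedEigenfunctions}, $\phi^{e/o}_{m,n} = F\,G$ with $F := F^{e/o}_m(\cdot,\hbar)$, $G := G^{e/o}_n(\cdot,\hbar)$ and $\hbar := \hbar^{e/o}_{m,n}$, where $F$ and $G$ solve the coupled system \eqref{MATHIEU} for one and the same separation constant $\alpha$; by Remark \ref{alpha at rationals} this common value is $\alpha = \alpha^{e/o,\vartheta}_n(\hbar) = \alpha^{e/o,\rho}_m(\hbar)$. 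Restricting to $\rho = \rho_{\max}$ and using $\lambda^{e/o}_{m,n} = \hbar^{-1}$ gives
\begin{equation*}
u^{e/o}_{m,n}(\vartheta) \;=\; c_{m,n}\, G^{e/o}_n(\vartheta,\hbar), \qquad
c_{m,n} \;=\; \begin{cases} F(\rho_{\max}), & \text{Neumann},\\[3pt] \hbar\, F'(\rho_{\max}), & \text{Dirichlet}. \end{cases}
\end{equation*}

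Next I would check the one point that needs an argument, namely $c_{m,n}\neq 0$, so that $u^{e/o}_{m,n}$ really is a nonzero function. In the Dirichlet case the radial boundary condition \eqref{BP} is $F(\rho_{\max}) = 0$; if in addition $F'(\rho_{\max}) = 0$, then $F\equiv 0$ by uniqueness for the second-order linear radial ODE, contradicting $\phi^{e/o}_{m,n}\not\equiv 0$, so $c_{m,n} = \hbar F'(\rho_{\max})\neq 0$. In the Neumann case the radial condition is $F'(\rho_{\max}) = 0$, and the same uniqueness argument forces $c_{m,n} = F(\rho_{\max})\neq 0$. Hence in both cases $u^{e/o}_{m,n}$ is a nonzero scalar multiple of $G^{e/o}_n(\cdot,\hbar)$. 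Since $G^{e/o}_n(\cdot,\hbar)$ is a $2\pi$-periodic solution of the angular equation in \eqref{MATHIEU}, i.e. $Op_\hbar(I)\,G^{e/o}_n = \alpha\,G^{e/o}_n$ with $\hbar = \hbar^{e/o}_{m,n}$, it follows that $Op_\hbar(I)\,u^{e/o}_{m,n} = \alpha\,u^{e/o}_{m,n}$, so $u^{e/o}_{m,n}$ is an eigenfunction of the angular Schr\"odinger operator. Because the eigenvalue equation is pointwise, the Rayleigh quotient \eqref{alphaform} returns exactly $\alpha$, irrespective of whether the boundary inner product is taken with respect to $d\vartheta$ or to arclength.

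Finally I would identify $\alpha$ with the labelled Bohr--Sommerfeld quantity. By Remark \ref{alpha at rationals} the separation constant equals $\alpha^{e/o,\vartheta}_n(\hbar^{e/o}_{m,n})$; by the conventions of Section \ref{BSQC} (matching the angular BSQC curves with the even/odd Mathieu characteristic numbers $a'_n$, $b'_n$, via the Remark there), this is $\alpha^{e/o,1^+,\vartheta}_n(\hbar)$ when the associated invariant curve lies outside the separatrix, i.e. $\alpha>1$, and $\alpha^{e/o,1^-,\vartheta}_n(\hbar)$ when it lies inside, i.e. $\alpha<1$ (the separatrix value $\alpha = 1$ being excluded, as throughout the paper). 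I do not expect a genuine obstacle: the whole argument is separation of variables together with ODE uniqueness, and the only delicate points are the non-vanishing of $c_{m,n}$ — which is precisely where the complementary nature of the Dirichlet and Neumann radial conditions is used — and the clean bookkeeping of which side of the separatrix one is on in the last clause.
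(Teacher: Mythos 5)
Your argument is correct and is essentially the paper's proof, which simply declares the result ``obvious by equations \eqref{SeparatedEigenfunctions}, \eqref{MATHIEU}, and \eqref{I}'' — i.e., exactly the separation-of-variables unwinding you carry out. Your explicit check that $c_{m,n}\neq 0$ via ODE uniqueness is a detail the paper leaves implicit, and it is the right way to justify that the modified boundary trace is a genuine (nonzero) eigenfunction.
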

\begin{proof} The proof is obvious by equations \eqref{SeparatedEigenfunctions}, \eqref{MATHIEU}, and \eqref{I}. \end{proof} 

\begin{rema} \label{modified u vs u}
It is important to note that although in the Neumann case our modified boundary trace ${u_{m,n}^{e/o}}$ is the same as the boundary trace $\left({u_{m,n}^{e/o}}\right)^b$ defined by \eqref{ujbdef}, but they are slightly different in the Dirichlet case as in this case
$$  \left({u_{m,n}^{e/o}}\right)^b = - \frac{1}{ \sqrt{c^2(\cosh^2 \rho_{\max} - \cos^2 \vartheta)}}{u_{m,n}^{e/o}}, $$
which is due to the relation 
$$  \frac{\d}{\d \nu} = - \frac{1}{ \sqrt{c^2(\cosh^2 \rho_{\max} - \cos^2 \vartheta)}}\frac{\d}{\d \rho} \bigg\rvert_{\rho = \rho_{\max}}.$$
\end{rema}

Our goal is to show that, for any invariant curve $I = \alpha$, of the billiard map lying inside or outside the separatrix curve, there exists a ladder of separable eigenfunctions $\phi^{e/o}_{m, n}$ whose Cauchy data $ \left({u_{m,n}^{e/o}}\right)^b$  concentrates on the invariant curve in $B^* \partial E$. In order to prove this we first need the following lemma.

\begin{lem}\label{SEQ}  For any $\alpha \in [0, \cosh^2 \rho_{\max}]$, there exists a subsequence of $\{\hbar^{e/o}_{mn}: \; (m, n) \in \mathbb{N}^2 \}$ 
	(for either Dirichlet or Neumann boundary conditions) along which the eigenvalues of the semiclassical angular operator $\{Op_\hbar (I)\}|_{\hbar = \hbar^{e/o}_{mn}}$ converges to $\alpha$. Here, $e/o$ means that any choice of even or odd can be selected. 
\end{lem}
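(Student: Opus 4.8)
\textbf{Proof strategy for Lemma \ref{SEQ}.} The plan is to realize $\alpha$ as a limit of the angular eigenvalues $\alpha_n^{e/o,1^\pm,\vartheta}(\hbar_{mn}^{e/o})$ by choosing a suitable ray $(m,n)$ in the eligible sector. I will treat the case $\alpha$ strictly outside the separatrix, i.e. $\alpha\in(1,\cosh^2\rho_{\max})$ (the case $\alpha\in(0,1)$ is identical after exchanging the inside/outside BSQC formulas, and the boundary/separatrix values $\alpha\in\{0,1,\cosh^2\rho_{\max}\}$ are handled at the end by a limiting argument). Fix a small $\epsilon>0$ with $\alpha\in[1+2\epsilon,\cosh^2\rho_{\max}-2\epsilon]$. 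Recall from Section \ref{Keller-Rubinow algorithm} that $A_0(\beta)=I_\rho(\beta)/I_\vartheta(\beta)$ is continuous and strictly monotonic on $[1,\cosh^2\rho_{\max}]$, hence a homeomorphism onto its image; set $r_\star:=A_0(\alpha)$, so $r_\star$ lies in the interior of the interval $A_0[1+2\epsilon,\cosh^2\rho_{\max}-2\epsilon]$, and $\alpha_{(0)}(r_\star)=\alpha$ where $\alpha_{(0)}$ is the inverse function from \eqref{alpha h r}.

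\textbf{Choosing the ray.} Since $r_\star\in(K_1,K_2)$ is an interior point, I can pick a sequence of rationals $m_k/n_k\to r_\star$ with $n_k\to\infty$ and with each $m_k/n_k$ still in $A_0[1+2\epsilon,\cosh^2\rho_{\max}-2\epsilon]$; concretely take $n_k=k$ and $m_k=\lfloor r_\star k\rfloor$ for $k$ large. By the Keller--Rubinow algorithm, for each such $(m_k,n_k)$ with $k$ large there is a unique $\hbar_{m_kn_k}^{e/o}\in[0,\hbar_0]$ solving $Q(\hbar,m_k,n_k)=\hbar$, and since $Q(0,m,n)=I_\vartheta(\alpha_{(0)}(m/n))/n$ with $\partial_\hbar Q(0,m,n)=0$ and $\partial_\hbar Q<\tfrac12$ on $[0,\hbar_0]$, the contraction estimate gives $\hbar_{m_kn_k}^{e/o}=Q(0,m_k,n_k)+O(Q(0,m_k,n_k)^2)=O(1/n_k)\to 0$ as $k\to\infty$. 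Thus along this ray the semiclassical parameter tends to zero, as required for a quantum-limit statement.

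\textbf{Identifying the limit of the eigenvalue.} By Proposition \ref{modified u}, the eigenvalue of $Op_\hbar(I)$ attached to $\phi^{e/o}_{m_kn_k}$ is $\alpha_{n_k}^{e/o,1^+,\vartheta}(\hbar_{m_kn_k}^{e/o})$, which by Remark \ref{alpha at rationals} equals the common value $\alpha(\hbar_{m_kn_k}^{e/o},m_k/n_k)$. Using the asymptotic expansion \eqref{alpha h r}, namely $\alpha(\hbar,r)=\alpha_{(0)}(r)+O(\hbar)$ uniformly for $r$ in the compact interval $A_0[1+2\epsilon,\cosh^2\rho_{\max}-2\epsilon]$, together with $\hbar_{m_kn_k}^{e/o}\to 0$ and the continuity of $\alpha_{(0)}$ at $r_\star$, one gets
\begin{equation*}
\alpha_{n_k}^{e/o,1^+,\vartheta}\big(\hbar_{m_kn_k}^{e/o}\big)=\alpha_{(0)}\!\left(\tfrac{m_k}{n_k}\right)+O\big(\hbar_{m_kn_k}^{e/o}\big)\longrightarrow \alpha_{(0)}(r_\star)=\alpha .
\end{equation*}
This is exactly the claimed convergence of the angular eigenvalues to $\alpha$ along the subsequence $\{\hbar_{m_kn_k}^{e/o}\}$.

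\textbf{Endpoint and separatrix cases.} For $\alpha=\cosh^2\rho_{\max}$ (motion along the boundary), $\alpha=0$ (motion along the minor axis), or $\alpha=1$ (separatrix), the uniform expansions above degenerate, so instead I argue by approximation: pick $\alpha^{(i)}$ in the open intervals with $\alpha^{(i)}\to\alpha$, choose for each $i$ a ray and an index $k=k(i)$ large enough that both $\hbar_{m_kn_k}^{e/o}<1/i$ and $|\alpha_{n_k}^{e/o,\cdot,\vartheta}(\hbar_{m_kn_k}^{e/o})-\alpha^{(i)}|<1/i$, and then extract the diagonal subsequence. The main obstacle is purely bookkeeping: making sure the $O(\hbar)$ error in \eqref{alpha h r} is genuinely uniform in $r$ over the relevant compact subinterval (which is guaranteed by the $\epsilon$-uniform remainder estimates stated with the BSQC in Section \ref{BSQC}) and that the chosen rationals $m_k/n_k$ never leave the eligible sector \eqref{sector}, so that the Keller--Rubinow fixed-point construction genuinely applies for all large $k$.
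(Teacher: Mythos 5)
Your proposal is correct and follows essentially the same route as the paper's proof: reduce to the open intervals on either side of the separatrix, pick $r_\star$ with $\alpha_{(0)}(r_\star)=\alpha$, choose lattice points $m_k/n_k\to r_\star$ in the eligible sector, and use $\alpha(\hbar,r)=\alpha_{(0)}(r)+\mathcal O(\hbar)$ together with Remark \ref{alpha at rationals} and continuity of $\alpha_{(0)}$, finishing the endpoint/separatrix values by density. The only difference is cosmetic: you spell out via the contraction estimate why $\hbar_{m_kn_k}=\mathcal O(1/n_k)$, which the paper uses implicitly.
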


\begin{proof} It suffices to prove that
	\begin{itemize} 
		\item[(1)] For any $\alpha \in (1, \cosh^2 \rho_{\max})$ corresponding to invariant curves outside the separatrix, there exists a subsequence of $\{\hbar^{e/o}_{mn}: \; (m, n) \in \mathbb{N}^2 \}$ 
(for either Dirichlet or Neumann boundary conditions) along which
$$\alpha^{e/o, 1^+, \vartheta}_n(\hbar^{e/o}_{mn}) \to \alpha. $$ 
\item[(2)] For any $\alpha \in (0, 1)$ corresponding to invariant curves inside the separatrix, there exists a subsequence
along which
$$\alpha^{e/o, 1^-, \vartheta}_n(\hbar^{e/o}_{mn}) \to \alpha.$$ 
\end{itemize}
A density argument would take care of the levels $\alpha=0$, $1$ and $\cosh^2 \rho_{\max}$. 

We shall only prove (1), as the proof of (2) is similar. Furthermore, we shall only focus on the even case because the  proof for the odd case is identical. Fix $\alpha \in (1, \cosh^2 \rho_{\max})$. We choose $\epsilon >0$ so that $\alpha \in [1+2\epsilon, \cosh^2 \rho_{\max}- 2\epsilon]$.  Let $\hbar_{mn}$ be the sequence we found in Section \ref{Keller-Rubinow algorithm} associated to the level curves outside the separatrix and to even eigenfunctions (even in the $y$ variable). By Remark \ref{alpha at rationals}, it suffices to show that there is a subsequence $(m_j, n_j)$ along which
$$ \alpha \left( \hbar_{m_j, n_j}, \frac{m_j}{n_j} \right) \to \alpha, \quad (j \to \infty). $$ We choose  $r_0$ by $\alpha_{(0)}(r_0) = \alpha$ (recall that $\alpha_{(0)}$ is monotonic) and choose a sequence of lattice points $(m_j, n_j) \in \mathbb N ^2$ in the eligible sector \eqref{sector} such that $\frac{m_j}{n_j} \to r_0$ and $|(m_j, n_j)| \to \infty$. Since, 
$$ \left| \alpha \left (\hbar_{m_j, n_j}, \frac{m_j}{n_j} \right) - \alpha_{(0)}\left (\frac{m_j}{n_j} \right ) \right|  = \mathcal O \left ( \hbar_{m_jn_j} \right) = \mathcal O \left ( {n_j^{-1}} \right ), $$
the lemma follows by letting $ j\to \infty$ and using the continuity of $\alpha_{(0)}$. 

\end{proof}

\subsection{Quantum limits of Cauchy data and the proof of Theorem \ref{LOCAL}}

By Proposition \ref{modified u}, the modified boundary
traces ${u_{m,n}^{e/o}}(\vartheta)$ of the separable eigenfunctions $\phi^{e/o}_{m, n}(\rho, \vartheta)$ of $\Delta$, 
are eigenfunctions of the semiclassical angular Schr\"odinger operator $\{ Op_\hbar(I) \}_{\hbar=\hbar^{e/o}_{mn}}$.
  It is well-known that eigenfunctions
of $1D$ semi-classical Schr\"odinger operators localize on level
sets of the symbol. Thus if we fix $\alpha$ in the action interval and choose a sequence of $\{\hbar^{e/o}_{mn}\}$ provided by Lemma \ref{SEQ}, then we know that along this sequence the quantum limit of $|{u_{m,n}^{e/o}} |^2 d \vartheta$ is a measure on $B^*\partial E$ that is supported on $I = \alpha$. We also know, by Egorov's theorem, that this measure must be invariant under the flow of $I$, therefore the quantum limit must be the Leray measure $d \mu_\alpha$. Since, by Remark \ref{modified u vs u}, in the Dirichlet case the boundary traces $\left (u_{m,n}^{e/o} \right )^b$ differ from $u_{m,n}^{e/o}$ by a factor $\left( c^2(\cosh^2 \rho_{\max} - \cos^2 \vartheta) \right ) ^{-1/2}$ caused by the conformal transformation from Cartesian to elliptical coordinates, and since 
$$ds = \sqrt{c^2(\cosh^2 \rho_{\max} - \cos^2 \vartheta)} d \vartheta,$$
we get
$$\left| \left (u_{m,n}^{e/o} \right )^b \right |^2 ds =  \frac{1}{\sqrt{c^2(\cosh^2 \rho_{\max} - \cos^2 \vartheta)}} \left| u_{m,n}^{e/o} \right |^2  d \vartheta \rightarrow  \frac{d \mu_\alpha}{\sqrt{c^2(\cosh^2 \rho_{\max} - \cos^2 \vartheta)}}, $$ which proves Theorem \ref{LOCAL} in the Dirichlet case. The Neumann case is essentially the same; we omit the details.

\section{Hadamard variational formulae for isospectral deformations}

We consider the Dirichlet (resp. Neumann) eigenvalue problems for
a one parameter family of Euclidean plane domain $\Omega_t$, where
$\Omega_0 = E$ is an ellipse:
\begin{equation} \label{EIG}  \left\{ \begin{array}{ll} - \Delta \phi_{j}(t)  =
\lambda_j^2(t) \phi_{j}(t) \;\; \mbox{in}\;\; \Omega_t, & \;\;
\\ \\\phi_{j}(t)
 = 0 \;\;(\mbox{resp.} \; \partial_{\nu_t} \phi_{j}(t) = 0) \; \mbox{on} \;\; \partial \Omega_t.
\end{array} \right. \end{equation} Here,
$\partial_{\nu_t}$ is the interior unit normal to $\d \Omega_t$. When
$\lambda_j^2(0)$ is a simple eigenvalue, then under a $C^1$
deformation the eigenvalue moves in a $C^1$ curve $\lambda^2_j(t)$.
When $\lambda^2_j(0)$ is a multiple eigenvalue, then in general the eigenvalue may split into branches. 
Examples in \cite{Kato} show that eigenfunctions do not necessarily deform nicely if the deformation is not analytic. Hence we cannot even assume that eigenfunctions are $C^1$ if the deformation is only $C^1$. 
  However, we  assume in this section  that
the deformation is isospectral. In this case, a multiple eigenvalue does not change multiplicity under the deformation, and therefore
there is no splitting into branches. 

When an eigenvalue has multiplicity $> 1$, there exists an  orthonormal basis (known as the Kato-Rellich basis)
of the eigenspace which moves smoothly under the deformation. The multiple eigenvalue splits under a generic perturbation and one can only expect a perturbation formula along each path. 
When we assume  that the deformation is isospectral, hence that the  eigenvalue does not split (or even
change) along the deformation, then there exists a Kato-Rellich basis for the eigenspace. 
\subsection{\label{HD} Hadamard variational formulae}

As in the
introduction, we parameterize the deformation by a function
$\rho_t$ on $\partial E$ so that $\partial \Omega_t$ is the graph
of $\rho_t$ over $\partial \Omega_0 = \partial E$ in the sense
that $\partial \Omega_t = \{x + \rho_t(x) \nu_x: x \in \partial
\Omega_0\}$. If $\dot{\rho} : = \frac{d}{dt} \rho_t |_{t = 0}
\not= 0$, then the first order variation of eigenvalues is the
same as for the deformation by $x + t \dot{\rho}(x) \nu_x$.  In this section we review the Hadamard variational formula in the case of simple eigenvalues. We refer to \cite[Section 1]{HeZ12} for background on the Hadamard variational formula.

 When $\lambda_j^2(0)$ is a simple eigenvalue (i.e. of
 multiplicity one) with $L^2$-normalized eigenfunction $\phi_j$, then Hadamard's variational formula for plane domains is that
\begin{equation} \label{DOT} \mbox{Dirichlet:}\;\;\;\; {(\lambda_j^2)}^{\cdot} = \int_{\partial \Omega_0} ( \partial_{\nu}
\phi_{j})^2  \dot{\rho}\, ds, \end{equation} where $ds$
is the induced arc-length measure.  Hence, under an infinitesimal
isospectral deformation we have, for every simple eigenvalue,
\begin{equation} \label{DHD}\mbox{Dirichlet:}\;\;\;\; \int_{\partial \Omega_0} (\partial_{\nu}
\phi_{j})^2  \dot{\rho} \, ds = 0. \end{equation}

Hadamard's variational formula is actually a variational formula
for the variation of the  Green's functions $G(\lambda,
x, y)$ with the given boundary conditions. In the Dirichlet case
it states that
$$ \dot{G} (\lambda, x, y) = - \int_{\partial \Omega_0}  \frac{\partial}{\partial \nu_1} G(\lambda, q, x)
\frac{\partial}{\partial \nu_1} G(\lambda, q, y) \dot{\rho} ds. $$
The formula (\ref{DHD}) follows if we compare the poles of order
two on each side.  The same comparison shows that if the
eigenvalue $\lambda^2_j(0)$ is repeated with multiplicity $m(\lambda_{j}(0))$ and if $\{\lambda_{j k}(t)\}_{j = 1}^{m(\lambda_j(0))}$ is
the perturbed set of eigenvalues, then
$$\frac{d}{dt} \bigg\rvert_{t=0} \sum_{k = 1}^{m(\lambda_j(0))} \lambda_{j k}^2(t) = \sum_{k =
1}^{m(\lambda_{j}(0))} \int_{\partial \Omega_0} (\partial_{\nu}
\phi_{j, k})^2   \dot{\rho} \, ds. $$
Here $\{\phi_{j, k} \}_{j = 1}^{m(\lambda_j(0))}$ is any ONB of the repeated eigenvalue $\lambda^2_j(0)$.

There exist similar Hadamard variational formulae in  the Neumann
case. When the eigenvalue is simple, we have
$$\left ( {\lambda}_j^2 \right)^\cdot = \int_{\partial \Omega_0} \left(|\nabla_{\partial \Omega_0} (\phi_{j})|^2
- \lambda_j^2 \phi_{j}^2 \right) \dot{\rho} \, ds,
$$ hence
\begin{equation} \label{NHD} \mbox{Neumann:}\;\;\; \int_{\partial \Omega_0} \left(|\nabla_{\partial \Omega_0} (\phi_{j})|^2
 - \lambda_j^2 \phi_{j}^2 \right) \dot{\rho} \,
ds= 0. \end{equation}

\subsection{Hadamard variational formula for an isospectral deformation} 

We now assume that the deformation is isospectral. As mentioned above, there exists a  Kato-Rellich basis which moves smoothly under the deformation. 
In fact, we show that for an isospectral deformation every eigenfunction has a smooth deformation along the path. In the following $-\Delta_t$ denotes the Dirichlet (resp. Neumann) Laplacian on $\Omega_t$. 

\begin{lem} \label{deformation of phi} Suppose that $\Omega_t$ is a $C^1$ Dirichlet (resp. Neumann) isospectral deformation. Then any eigenfunction $\phi_j(0)$ of $-\Delta_0$ on $\Omega_0$, has a $C^1$ deformation $\phi_j(t)$ of eigenfunctions of $-\Delta_t$ on $\Omega_t$. 
	\end{lem}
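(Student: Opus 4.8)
The plan is to pull back the eigenvalue problem on the moving domain $\Omega_t$ to a fixed domain via the diffeomorphism $\Phi_t : \Omega_0 \to \Omega_t$ induced by $x \mapsto x + \rho_t(x)\nu_x$ (suitably extended into the interior), so that $-\Delta_t$ becomes a $t$-dependent family $L_t$ of second-order elliptic operators on the fixed domain $\Omega_0$ with fixed boundary condition. Since $t \mapsto \rho_t$ is $C^1$ into $C^\infty(\partial E)$, the coefficients of $L_t$ depend $C^1$ on $t$ (in the appropriate operator topology), and $L_0 = -\Delta_0$. The point of the reduction is that now the eigenfunctions all live in the single Hilbert space $L^2(\Omega_0)$ (or rather, one must carry along the $t$-dependent volume density from the change of variables, which is again $C^1$ in $t$), so Kato--Rellich perturbation theory for a $C^1$ family of self-adjoint operators with compact resolvent applies.

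Next I would invoke the isospectral hypothesis. For a general $C^1$ family the eigenvalue branches emanating from a multiple eigenvalue $\lambda_j^2(0)$ need not be differentiable and the associated spectral projection need not be $C^1$; but the $t$-dependent spectral projection $P_j(t)$ onto the sum of all eigenspaces of $L_t$ with eigenvalues in a small fixed interval around $\lambda_j^2(0)$ is always continuous (indeed as smooth as the family), because it is given by the Riesz contour integral $P_j(t) = \frac{1}{2\pi i}\oint_\Gamma (L_t - z)^{-1}\,dz$ over a fixed contour $\Gamma$ enclosing only $\lambda_j^2(0)$ — and here we use isospectrality to know that for all small $t$ the only spectrum of $L_t$ inside $\Gamma$ is exactly $\lambda_j^2(0)$, with the same multiplicity. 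Since the resolvent $(L_t-z)^{-1}$ is $C^1$ in $t$ for $z \in \Gamma$, $P_j(t)$ is $C^1$ in $t$ with values in the bounded operators. The rank of $P_j(t)$ is constant (equal to $m(\lambda_j(0))$), so the standard Kato--Rellich device of transporting a fixed orthonormal basis of $\mathrm{ran}\,P_j(0)$ along the path by the unitary $U_j(t)$ solving $\dot U_j = [\dot P_j(t), P_j(t)] U_j$, $U_j(0) = I$, produces a $C^1$ family of orthonormal bases of $\mathrm{ran}\,P_j(t)$ — each of which consists of genuine eigenfunctions of $L_t$ with eigenvalue $\lambda_j^2(t) = \lambda_j^2(0)$ because the whole range of $P_j(t)$ is an eigenspace. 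Pulling back by $\Phi_t$ (and normalizing) gives the $C^1$ family $\phi_j(t)$ of eigenfunctions of $-\Delta_t$ on $\Omega_t$.

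Finally I would argue that \emph{every} eigenfunction $\phi_j(0)$, not just the members of this particular transported basis, admits such a $C^1$ deformation: given an arbitrary unit eigenfunction $\phi_j(0) \in \mathrm{ran}\,P_j(0)$, simply set $\phi_j(t) := U_j(t)\phi_j(0)$ (then pulled back and normalized); since $U_j(t)$ acts on the whole range, this is again an eigenfunction for each $t$ and is $C^1$ in $t$, and it reduces to $\phi_j(0)$ at $t=0$. This is exactly the Kato--Rellich basis statement upgraded to arbitrary initial eigenvectors, made possible by the rigidity of the spectrum.

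The main obstacle, and the place requiring the most care, is the first step: verifying that the pulled-back family $L_t$ is genuinely $C^1$ in $t$ in a sense strong enough that the resolvents $(L_t - z)^{-1}$ are norm-$C^1$ on the fixed contour. One must choose the interior extension of the boundary deformation carefully (e.g. a collar-adapted extension supported near $\partial E$), track the Jacobian factor so that the $t$-family of inner products on $L^2(\Omega_0)$ is $C^1$ and uniformly equivalent, and confirm that the operator coefficients — which involve first derivatives of $\rho_t$ — inherit the $C^1([0,t_0],C^\infty)$ regularity of $t \mapsto \rho_t$. Once this regularity is in hand, everything downstream is the standard Riesz-projection/Kato--Rellich machinery; isospectrality is what rescues differentiability of the branches by forcing $\lambda_j^2(t) \equiv \lambda_j^2(0)$ and $\mathrm{rank}\,P_j(t) \equiv \mathrm{const}$.
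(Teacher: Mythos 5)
Your proposal is correct and follows essentially the same route as the paper: both exploit isospectrality to guarantee that the Riesz contour projector over a fixed circle around $\lambda_j^2(0)$ has range equal to the full (unsplit) eigenspace, and both get $C^1$ dependence of that projector from $C^1$ dependence of the resolvent. The only difference is cosmetic: rather than pulling the operator back to $\Omega_0$ and running the Kato transport unitary $\dot U=[\dot P,P]U$, the paper simply sets $\phi_j(t):=P_t\bigl(\phi_j(0)\circ f_t\bigr)$ on $\Omega_t$, which shortcuts your last two steps.
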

\begin{proof} 
	Let $\lambda_j^2 (0)$ be the eigenvalue of $\phi_j(0)$, of multiplicity $m_j \geq 1$, and $\gamma$ be a circle in $\C$ centered at $\lambda_j^2 (0)$ such that no other eigenvalues of $-\Delta_{0}$ are in the interior of $\gamma$ or on $\gamma$. We define
	$$P_t= -\frac{1}{2\pi i} \int_\gamma z {R}_{t}(z) \,dz,$$
	where ${R}_{t}(z)= ( -\Delta_t - z)^{-1}$ is the resolvent of $-\Delta_t$.  By the Cauchy integral formula, it is clear that $P_0$ is the orthogonal projector onto the eigenspace of $\lambda_j^2 (0)$. Since the eigenvalues $\{\lambda_{j,k}^2(t)\}_{k=1}^{m_j}$ vary continuously in $t$, for $t$ small these are the only eigenvalues of $-\Delta_t$ in $\gamma$. Therefore, in general, $P_t$ is the total projector (the direct sum of projectors) associated with $\{\lambda_{j,k}^2(t)\}_{k=1}^m$. The operator $P_t$ is $C^1$ in $t$, since  the resolvent (hence,  Green's function) is $C^1$ in $t$ (see  \cite[Theorem II.5.4]{Kato}). Now assume $\Omega_t$ is an isospectral deformation. Since  the spectrum is constant along the deformation, $P_t$ projects every function on $\Omega_t$ onto an eigenfunction of $\Omega_t$ of eigenvalue $\lambda^2_j(0)$. Let $f_t$ be a $C^1$ family of smooth diffeomorphisms from $\Omega_t$ to $\Omega_0$ with $f_0 = \text{Id}$.   Then 
	$$\phi_j(t): = P_{t}( f_t^*\left ( \phi_j(0) \right )), \quad \left ( \text{here},\; f_t^*\left ( \phi_j(0) \right ) = \phi_j(0) \circ f_t \; \right)$$ must be an eigenfunction of $-\Delta_{t}$ of eigenvalue $ \la^2_j(0)$. 
	\end{proof}

We are now in position to prove:

\begin{lem} \label{HDVAR} Suppose that $\Omega_t$ is a $C^1$ isospectral deformation. Then
for any eigenfunction $\phi_j$ of $\Omega_0$,
\begin{equation} \label{Hadamard DN} \left\{ \begin{array}{ll} \int_{\partial \Omega_0} \dot{\rho} |\partial_{\nu} \phi_j|^2 = 0, & \rm{Dirichet}\\&\\
 \int_{\partial \Omega_0} \left(|\nabla_{\partial \Omega_0} (\phi_{j})|^2
 - \lambda_j^2 \phi_{j}^2 \right) \dot{\rho} \,
ds= 0, & {\rm Neumann} \end{array} \right. \end{equation}
\end{lem}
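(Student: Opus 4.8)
The plan is to derive \eqref{Hadamard DN} from the variational formula for the symmetric sum over an eigenbranch together with the fact that, for an isospectral deformation, that sum is identically zero, and then to use Lemma \ref{deformation of phi} to upgrade the statement from "the sum over the eigenspace vanishes" to "each summand associated with a Kato-Rellich (or any smoothly-varying) eigenfunction vanishes." First I would fix an eigenvalue $\lambda_j^2(0)$ of multiplicity $m_j$ and recall from Section \ref{HD} the Hadamard formula for the trace of the spectral projector: differentiating $\sum_{k=1}^{m_j}\lambda_{j,k}^2(t)$ at $t=0$ gives $\sum_{k=1}^{m_j}\int_{\partial\Omega_0}(\partial_\nu\phi_{j,k})^2\dot\rho\,ds$ in the Dirichlet case (and the analogous expression with $|\nabla_{\partial\Omega_0}\phi_{j,k}|^2-\lambda_j^2\phi_{j,k}^2$ in the Neumann case), valid for \emph{any} orthonormal basis $\{\phi_{j,k}\}$ of the eigenspace. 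Under an isospectral deformation the spectrum, including multiplicities, is constant, so $\sum_k\lambda_{j,k}^2(t)\equiv m_j\lambda_j^2(0)$ and hence the left-hand side vanishes. Thus $\sum_{k=1}^{m_j}\int_{\partial\Omega_0}(\partial_\nu\phi_{j,k})^2\dot\rho\,ds=0$.

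The key point that makes the \emph{individual} vanishing possible is that this identity holds simultaneously for every orthonormal basis of the eigenspace. Here I would invoke the standard argument: the map $B\mapsto\sum_k\int_{\partial\Omega_0}(\partial_\nu(U\phi_{j,k}))^2\dot\rho\,ds$, where $U$ ranges over the orthogonal group acting on the eigenspace and $\phi_{j,k}=\sum_\ell U_{k\ell}\phi_{j,\ell}$, is constant (equal to $0$) in $U$; differentiating this constancy at $U=\mathrm{Id}$ along an arbitrary skew-symmetric generator forces the off-diagonal quadratic-form entries $\int_{\partial\Omega_0}\partial_\nu\phi_{j,k}\,\partial_\nu\phi_{j,\ell}\,\dot\rho\,ds$ to vanish, and then each diagonal term $\int_{\partial\Omega_0}(\partial_\nu\phi_{j,k})^2\dot\rho\,ds$ must itself be zero since the sum of nonnegative—wait, these are not a priori nonnegative, so instead: the quadratic form $(\psi,\chi)\mapsto\int_{\partial\Omega_0}\partial_\nu\psi\,\partial_\nu\chi\,\dot\rho\,ds$ on the eigenspace has trace zero in every orthonormal basis, which is equivalent to the form being traceless, hence cannot by itself give vanishing of each diagonal entry. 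The cleaner route, which I would actually adopt, is: apply the \emph{simple-eigenvalue} Hadamard formula not to the perturbed branches but directly to $\phi_j(t)$ from Lemma \ref{deformation of phi}. That lemma produces a genuine $C^1$ family $\phi_j(t)$ of $\Delta_t$-eigenfunctions with eigenvalue $\lambda_j^2(0)$, constant in $t$; computing $\frac{d}{dt}\big|_{t=0}\langle-\Delta_t\phi_j(t),\phi_j(t)\rangle/\langle\phi_j(t),\phi_j(t)\rangle$ by the usual Rayleigh-quotient/Hadamard computation (boundary term from moving the domain, bulk terms cancelling against the eigenvalue equation as in the derivation of \eqref{DOT} and of the Neumann formula) yields exactly $\int_{\partial\Omega_0}(\partial_\nu\phi_j)^2\dot\rho\,ds$ (resp. the Neumann integrand), while the left side is $\frac{d}{dt}\lambda_j^2(0)\equiv 0$.

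The main obstacle is precisely the justification that the per-eigenfunction Hadamard formula remains valid in the multiple-eigenvalue case, i.e. that the non-differentiability of generic eigenbranches does not intrude. The resolution is the one emphasized in the paper's introduction: for an isospectral deformation there is no splitting, so the total projector $P_t$ of Lemma \ref{deformation of phi} varies in $C^1$ and $\phi_j(t)=P_t(f_t^*\phi_j(0))$ is a bona fide $C^1$ curve of eigenfunctions; there is no need to track individual branches at all. Once that is in hand, the actual differentiation is the classical Hadamard computation (pulling back to $\Omega_0$ via $f_t$, differentiating the weak form, integrating by parts, and reading off the boundary contribution proportional to $\dot\rho$), which I would cite from \cite[Section 1]{HeZ12} rather than reproduce; the Dirichlet case uses $\phi_j|_{\partial\Omega}=0$ to reduce the boundary term to $(\partial_\nu\phi_j)^2$, and the Neumann case uses $\partial_\nu\phi_j|_{\partial\Omega}=0$ to produce the tangential-gradient and zeroth-order terms. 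This completes the proof of both identities in \eqref{Hadamard DN}.
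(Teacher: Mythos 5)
Your final argument is correct and is essentially the paper's proof: both rest on Lemma \ref{deformation of phi} to produce a $C^1$ family $\phi_j(t)$ of eigenfunctions with constant eigenvalue (so no branch-splitting issues arise), followed by the classical Hadamard differentiation after pulling back by $f_t$. The only cosmetic difference is that you differentiate the Rayleigh quotient while the paper differentiates the pulled-back eigenvalue equation and pairs with $\phi_k(0)$ (which also yields the off-diagonal identities $\int_{\partial\Omega_0}\dot\rho\,\partial_\nu\phi_j\,\partial_\nu\phi_k\,ds=0$); your correct observation that the symmetric-sum formula alone only gives tracelessness is exactly why the paper takes this route.
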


\begin{proof}

Let $\phi_j(0)$ be any eigenfunction of $\Omega_0$ and $\phi_j(t)$ be the $C^1$ deformation of eigenfunction of $\Omega_t$ provided by Lemma \ref{deformation of phi}.
For $t > 0$, the eigenvalue problem for the isospectral deformation is pulled back to $\Omega_0$ by a $C^1$ family diffeomorphisms $f_t$, with $f_0 = \text{Id}$, and has the form,
$$( \widetilde{\Delta}_{t} + \lambda^2_j) \widetilde{\phi}_j(t) = 0, $$
where $ \widetilde{\Delta}_{t}$ and $\widetilde{\phi}_j(t)$ are the pullbacks of $ {\Delta}_{t}$ and ${\phi}_j(t)$ to $\Omega_0$, respectively. 
Taking the variation gives 
$$\dot{\widetilde{\Delta}} \phi_j(0) +(\Delta_{0} + \lambda^2_j) \dot{\widetilde{\phi}_j}(0) = 0. $$
Take the inner product with $\phi_k(0)$ in the same eigenspace. Integration by parts in the second term kills the second term. Thus we
get 
$$\langle \dot{\Delta}  \phi_j(0), \phi_k(0) \rangle  = 0. $$
The variation $\dot{\Delta} $ can be calculated (see  for example \cite{HeZ12}) to obtain:
$$\int_{\partial \Omega_0} \dot{\rho} (\partial_{\nu} \phi_j) (\partial_{\nu} \phi_k) ds = 0,$$
for all $\phi_j, \phi_k$ in the $\lambda_j$-eigenspace of the Dirichlet problem. A similar proof works for the relevant quadratic form for the Neumann problem.

\end{proof}

\section{Proof of Theorem \ref{RIGID}}

Before we prove our main theorem, we need to study the limits of the equations \eqref{Hadamard DN}
along sequences of eigenvalues introduced in Theorem  \ref{LOCAL}. 

\begin{cor}\label{ZERO} Let $\dot{\rho}$ be the first variation of a Dirichlet (or Neumann) isospectral
deformation of an ellipse $E$.  Then for all $ 0\leq \alpha \leq \cosh^2(\rho_{\text{max}})$,
$$  \int_{I =\alpha}  \frac{\dot{\rho} }{\sqrt{\cosh^2 \rho_{\max} - \cos^2 \vartheta}} d \mu_\alpha = 0. $$
\end{cor}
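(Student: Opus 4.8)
The plan is to combine Lemma \ref{HDVAR} with Theorem \ref{LOCAL} (and its Corollary \ref{LOCALCOR}) applied to the special sequences of \emph{separable} eigenfunctions provided by Lemma \ref{SEQ}. Here is the outline in the Dirichlet case; the Neumann case is parallel.

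\medskip
\noindent\textbf{Step 1: Reduce the Hadamard identity to squares of Cauchy data.}
By Lemma \ref{HDVAR}, for \emph{every} eigenfunction $\phi_j$ of the ellipse $E = \Omega_0$ we have $\int_{\partial E} \dot{\rho}\, |\partial_\nu \phi_j|^2\, ds = 0$. In particular this holds for each separable eigenfunction $\phi^{e/o}_{m,n}$ of \eqref{SeparatedEigenfunctions}, for which the Dirichlet Cauchy data is $\left(u^{e/o}_{m,n}\right)^b = \lambda_{mn}^{-1}\partial_\nu \phi^{e/o}_{m,n}|_{\partial E}$. Dividing by the (positive, finite — using the lower bound of \cite[Theorem 1.1]{HT} so the limit is meaningful, though for the vanishing statement one only needs that it is nonzero) quantity $\int_{\partial E}|(u^{e/o}_{m,n})^b|^2 ds$, we get
$$\frac{\int_{\partial E} \dot{\rho}\,\left|(u^{e/o}_{m,n})^b\right|^2 ds}{\int_{\partial E}\left|(u^{e/o}_{m,n})^b\right|^2 ds} = 0$$
for all $(m,n)$ in the action lattice.

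\medskip
\noindent\textbf{Step 2: Take the limit along a ladder converging to a given invariant curve.}
Fix $\alpha \in [0,\cosh^2\rho_{\max}]$. By Lemma \ref{SEQ} there is a subsequence $\{\hbar^{e/o}_{m_j n_j}\}$ along which the eigenvalue of $\{Op_\hbar(I)\}$ converges to $\alpha$; by Proposition \ref{modified u} this eigenvalue is exactly the matrix element \eqref{alphaform}. As explained in the proof of Theorem \ref{LOCAL}, the modified boundary traces $u^{e/o}_{m_j n_j}(\vartheta)$ are eigenfunctions of the one-dimensional semiclassical Schr\"odinger operator $Op_\hbar(I)$, so along this subsequence $|u^{e/o}_{m_j n_j}|^2 d\vartheta$ has a weak$^*$ limit supported on $\{I=\alpha\}$ which, being invariant under the flow of $I$ (Egorov), is the Leray measure $d\mu_\alpha$ up to normalization. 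Passing to the Cauchy data via Remark \ref{modified u vs u} (the factor $(c^2(\cosh^2\rho_{\max}-\cos^2\vartheta))^{-1/2}$) and the arclength formula $ds = \sqrt{c^2(\cosh^2\rho_{\max}-\cos^2\vartheta)}\,d\vartheta$, the limit of the normalized integral in Step 1 is, by Corollary \ref{LOCALCOR},
$$\frac{\int_{I=\alpha} \dot{\rho}\, d\nu_\alpha}{\int_{I=\alpha} d\nu_\alpha} = \frac{1}{\int_{I=\alpha} d\nu_\alpha}\int_{I=\alpha} \frac{\dot{\rho}}{\sqrt{c^2(\cosh^2\rho_{\max}-\cos^2\vartheta)}}\, d\mu_\alpha.$$
Since every term in the sequence equals $0$, so does the limit; as $\int_{I=\alpha} d\nu_\alpha > 0$ this forces $\int_{I=\alpha}\frac{\dot{\rho}}{\sqrt{\cosh^2\rho_{\max}-\cos^2\vartheta}}\,d\mu_\alpha = 0$ (the constant $c$ pulls out). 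This is the claimed identity for $\alpha$ in the open subintervals $(0,1)$ and $(1,\cosh^2\rho_{\max})$; the endpoints $\alpha = 0, 1, \cosh^2\rho_{\max}$ follow by continuity of $\alpha \mapsto \int_{I=\alpha}\frac{\dot{\rho}}{\sqrt{\cosh^2\rho_{\max}-\cos^2\vartheta}}\,d\mu_\alpha$ in $\alpha$ (the density argument already invoked at the end of the proof of Lemma \ref{SEQ}), or by taking sequences of interior $\alpha$'s approaching the endpoints.

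\medskip
\noindent\textbf{Main obstacle.}
The analytic content is entirely packaged in the earlier results, so the only real point to be careful about is the bookkeeping in Step 2: matching the one-dimensional quantum limit of $|u^{e/o}_{m_j n_j}|^2 d\vartheta$ (a measure on the base circle $\R/2\pi\Z$, or rather on $B^*\partial E$ after lifting) with the Leray measure normalization, and correctly carrying the conformal factor from the $\partial_\rho$-to-$\partial_\nu$ conversion of Remark \ref{modified u vs u} through the ratio so that the $L^2$ normalizations cancel and one lands on exactly $d\nu_\alpha$ as in \eqref{dnu}. One should also note that the Neumann case uses the reciprocal conformal factor, which is why the conclusion — with the \emph{same} weight $1/\sqrt{\cosh^2\rho_{\max}-\cos^2\vartheta}$ — is slightly less transparent there and requires tracking both the Neumann $d\nu_\alpha$ from \eqref{dnu} and the Neumann Hadamard form \eqref{NHD}; a short separate computation shows the two combine to give the stated identity, which is why the statement is uniform in the boundary condition.
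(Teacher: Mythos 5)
Your proposal is correct and follows essentially the same route as the paper: combine Lemma \ref{HDVAR} with the quantum limits of Theorem \ref{LOCAL} along the ladders produced by Lemma \ref{SEQ}, divide by the $L^2$ norm of the Cauchy data, pass to the limit, and treat the endpoint values of $\alpha$ by continuity. The ``short separate computation'' you defer in the Neumann case is precisely what the paper carries out: on $I=\alpha$ one has $|\eta|^2-1=(\alpha-\cosh^2\rho_{\max})/(\cosh^2\rho_{\max}-\cos^2\vartheta)$, so the weight $(|\eta|^2-1)\sqrt{c^2(\cosh^2\rho_{\max}-\cos^2\vartheta)}$ coming from the Neumann Hadamard form together with the Neumann $d\nu_\alpha$ reduces, after pulling out the nonzero constant $\alpha-\cosh^2\rho_{\max}$, to the stated weight $1/\sqrt{\cosh^2\rho_{\max}-\cos^2\vartheta}$.
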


\begin{proof} 
The Dirichlet case follows immediately from Theorem \ref{LOCAL} and Lemma \ref{HDVAR}. For the Neumann case, we observe that  by Theorem \ref{LOCAL} the quantum limit of 
$$ \lambda_j^{-2}  |\nabla_{\partial \Omega_0} (\phi_{j})|^2
- \phi_{j}^2, $$
along a sequence of eigenfunctions that concentrates on the invariant curve $I =\alpha$ is  
$$(| \eta|^2 -1) d \mu_\alpha.$$
Therefore, in the Neumann case we get
\begin{equation} \label{Integral Neumann} \int_{I =\alpha} (| \eta|^2 -1)\, \sqrt{c^2(\cosh^2 \rho_{\max} - \cos^2 \vartheta)}\,  \dot{\rho} \, d \mu_\alpha =0. \end{equation}
We recall that $\eta$ is the symplectic dual of the arclength variable $s$. From the equation $\eta ds = p_\vartheta d \vartheta$, we find that in the $(\vartheta, p_\vartheta)$ coordinates, $\eta$ is given by $$\eta = \frac{p_\vartheta}{\sqrt{c^2(\cosh^2 \rho_{\max} - \cos^2 \vartheta)}}.$$
Since on $I =\alpha$, $p_\vartheta^2 = c^2 (\alpha - \cos^2 \vartheta)$,
$$|\eta|^2 -1 =  \frac{\alpha - \cosh^2 \rho_{\max} }{\cosh^2 \rho_{\max} - \cos^2 \vartheta}.$$
The corollary follows in the Neumann case by taking out the constant $\alpha - \cosh^2 \rho_{\max}$ from the integral \eqref{Integral Neumann}.
 \end{proof}

Theorem \ref{RIGID}, now reduces to:
\begin{prop}\label{RHODOT}  The only $\Z_2 \times \Z_2$ invariant function
	$\dot{\rho}$ satisfying the equations of Corollary \ref{ZERO} is
	$\dot{\rho} = 0$ for $\alpha \in (0, 1)$, i.e. for levels inside the separatrix.  Similarly, the same statement holds if we only know equations of Corollary \ref{ZERO} for  $\alpha \in (1, \cosh^2 \rho_{\max})$, i.e. levels outside the separatrix. \end{prop}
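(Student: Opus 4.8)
The plan is to reduce the integral equation of Corollary \ref{ZERO} to a classical Abel-type integral equation on an interval, then invert it. Fix attention on, say, the levels inside the separatrix, $\alpha \in (0,1)$. Using the explicit Leray measure \eqref{Leray}, $d\mu_\alpha = \frac{c}{2}(\alpha - \cos^2\vartheta)_+^{-1/2} d\vartheta$, the equation of Corollary \ref{ZERO} reads
\begin{equation*}
\int_{\{\cos^2\vartheta \le \alpha\}} \frac{\dot\rho(\vartheta)}{\sqrt{\cosh^2\rho_{\max} - \cos^2\vartheta}}\,\frac{d\vartheta}{\sqrt{\alpha - \cos^2\vartheta}} = 0, \qquad \alpha \in (0,1).
\end{equation*}
Exploiting the $\Z_2\times\Z_2$ symmetry of $\dot\rho$ (even in $x$, even in $y$, hence a function of $\cos^2\vartheta$ alone on the relevant symmetric ranges of $\vartheta$), I would introduce the substitution $u = \cos^2\vartheta \in [0,1]$. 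This turns $\dot\rho$ into a function $g(u)$ of $u$ and converts the $\vartheta$-integral, after collecting the four symmetric arcs, into an integral of the form $\int_0^\alpha \frac{g(u)\, w(u)}{\sqrt{\alpha - u}}\,du = 0$ for all $\alpha \in (0,1)$, where $w(u) = \big(\sqrt{\cosh^2\rho_{\max}-u}\,\sqrt{u(1-u)}\big)^{-1}$ comes from $d\vartheta = \frac{du}{2\sqrt{u(1-u)}}$ on each arc and the extra $\cosh$ factor.

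The second step is the inversion. The map $h(\alpha) := \int_0^\alpha \frac{(g\,w)(u)}{\sqrt{\alpha-u}}\,du$ is (up to a constant) the Abel transform of $g\,w$, and the Abel transform is injective on $L^1_{\mathrm{loc}}$: if $h \equiv 0$ on $(0,1)$ then $g(u) w(u) = 0$ a.e., and since $w(u) > 0$ on $(0,1)$ we get $g \equiv 0$ on $(0,1)$, i.e. $\dot\rho(\vartheta) = 0$ whenever $\cos^2\vartheta < 1$, which by continuity forces $\dot\rho \equiv 0$. Concretely, injectivity follows by applying the standard fractional-integration identity: convolving $h$ against $\alpha^{-1/2}$ and differentiating recovers $g\,w$ pointwise (a.e.), i.e. $(g\,w)(\alpha) = \frac{1}{\pi}\frac{d}{d\alpha}\int_0^\alpha \frac{h(s)}{\sqrt{\alpha-s}}\,ds$. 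One must check that $\dot\rho$, being smooth, makes all these manipulations (Fubini, differentiation under the integral) legitimate; this is routine given the integrable singularities.

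For the levels outside the separatrix, $\alpha \in (1, \cosh^2\rho_{\max})$, the constraint $\cos^2\vartheta \le \alpha$ is vacuous, so the integral runs over all $\vartheta \in [0,2\pi]$; substituting $u = \cos^2\vartheta$ again and using symmetry produces $\int_0^1 \frac{(g\,w_1)(u)}{\sqrt{\alpha - u}}\,du = 0$ for all $\alpha > 1$, where now the kernel $\frac{1}{\sqrt{\alpha-u}}$ is analytic in $\alpha$ on $(1,\infty)$ for $u \in [0,1]$. Here I would argue either by analytically continuing in $\alpha$ and expanding $\frac{1}{\sqrt{\alpha-u}}$ in powers of $u/\alpha$ to conclude that all moments $\int_0^1 (g\,w_1)(u)\, u^k\, du$ vanish (hence $g\,w_1 \equiv 0$ by the density of polynomials, the Stone–Weierstrass route mentioned in the paper), or by differentiating in $\alpha$ and letting $\alpha \downarrow 1$ to reduce to the Abel case. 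The main obstacle I anticipate is purely bookkeeping: correctly assembling the contributions of the several symmetric $\vartheta$-arcs and pinning down the weight $w$ so that one genuinely lands on a kernel to which Abel inversion (or the moment argument) applies; the functional-analytic core — injectivity of the Abel transform / completeness of monomials — is standard.
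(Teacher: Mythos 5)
Your proposal is correct and follows essentially the same route as the paper: reduce via the explicit Leray measure and the $\Z_2\times\Z_2$ symmetry to an Abel-type integral equation in the variable $u=\cos^2\vartheta$, invert the Abel transform for the levels inside the separatrix, and use analyticity in $\alpha$ together with a moment/Stone--Weierstrass argument for the levels outside. The only differences are cosmetic (the paper substitutes $u=\cos\vartheta$ and writes out the left inverse of the Abel transform explicitly via the identity $\int_v^u \frac{x\,dx}{\sqrt{u^2-x^2}\sqrt{x^2-v^2}}=\frac{\pi}{2}$, rather than quoting the standard inversion formula).
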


\begin{proof} Since $\dot {\rho}(\vartheta) $ is $\mathbb Z_2 \times \mathbb Z_2$ invariant we can put 
	$$P(\cos^2 \vartheta): = \frac{\dot{\rho}(\vartheta)}{\sqrt{\cosh^2 \rho_{\max} - \cos^2 \vartheta}}.$$
	 By our explicit formula \eqref{Leray} for the Leray measure $d \mu_\alpha$, and by the $\mathbb Z_2 \times \mathbb Z_2$ symmetry, we have
$$\int_{0}^{\frac{\pi}{2}} \frac{P(\cos^2 \vartheta)} {\sqrt{(\alpha - \cos^2
\vartheta)_+} }  d \vartheta = 0, \;\; \forall \,\,
0 \leq  \alpha \leq \cosh^2 \rho_{\max}.$$ 
Splitting this equation into $\alpha \leq 1$ and $\alpha \geq 1$ cases, we obtain:
\begin{equation} \label{Inside=0} \int_{0}^{\cos^{-1}( \sqrt{\alpha}) }  \frac{P(\cos^2 \vartheta)}{ \sqrt{\alpha -  \cos^2 \vartheta}}\;  \;d \vartheta = 0, \;\; \forall \, \, 0 \leq \alpha \leq 1. \end{equation}
\begin{equation} \label{Outside=0}\int_{0}^{\frac{\pi}{2}}  \frac{P(\cos^2 \vartheta)}{ \sqrt{\alpha -  \cos^2 \vartheta}}\;  \;d \vartheta = 0, \;\;
\forall \, \, 1 \leq \alpha \leq \cosh^2 \rho_{\max}. \end{equation}

It is sufficient to show that $P \equiv 0$, given \eqref{Inside=0} or \eqref{Outside=0}. 
\\

\textbf{Proof using invariant curves inside the separatrix.} We change variables to $u = \cos \vartheta$ and also set $x =\sqrt{\alpha}$.  Then the integral \eqref{Inside=0} becomes
\begin{equation} \int_{0}^{ x}  \frac{P(u^2)}{\sqrt{x^2 -  u^2}}\;  \; \frac{du}{ \sqrt{1 - u^2}} = 0, \;\;
\forall \, \, 0 \leq x \leq 1. \end{equation} Writing $f(u) =  \frac{P(u^2)}{
\sqrt{1 - u^2}}, $ this becomes
\begin{equation} \int_{0}^{x}  \frac{f(u)}{ \sqrt{x^2 -  u^2}}\; du = 0, \;\;
\forall \, \, 0 \leq x \leq 1. \end{equation}

The transform
$$\acal f(x) = \int_{0}^{ x}  \frac{f(u)}{ \sqrt{x^2 -  u^2}}\ du
$$ is closely related to the Abel transform. We claim that the left inverse Abel transform is given
by,
$$\acal^{-1} g(u)  = \frac{2}{\pi} \frac{d}{du}   \int_{0}^{ u}  \frac{x g(x)}{ \sqrt{u^2 -  x^2}}\; dx . $$
The key point is the integral identity,
$$I(u, v) : = \int_{v}^u \frac{x dx }{\sqrt{u^2 - x^2} \sqrt{x^2 - v^2}}     = \frac{\pi}{2}, \qquad ( v \leq u).
 $$
It follows that if $\bcal g(u) $ is the integral in the purported
inversion formula,
$$\begin{array}{lll} \bcal \acal f(u) & = & \frac{2}{\pi} \frac{d}{du}   \int_{0}^{ u}  \frac{x \acal f(x)}{ \sqrt{u^2 -  x^2}}\;
dx\\ && \\
& = &
 \frac{2}{\pi} \frac{d}{du}   \int_{0}^{ u}  \int_0^x  \frac{x }{ \sqrt{u^2 -
 x^2}}\frac{f(v)}{ \sqrt{x^2 -  v^2}}\; dv dx \\ && \\
 & = & \frac{2}{\pi}\frac{d}{du}   \int_{0}^{ u} I(u, v) f(v) dv
 \\ && \\
 & = & \frac{d}{du}   \int_{0}^{ u} f(v) dv= f(u). \end{array}$$

Since $\acal$ is left invertible, it follows that $\ker \acal =
\{0\}$. Since $f(u) =  \frac{P(u^2)}{
	\sqrt{1 - u^2}}$ lies in its kernel, we have $P
= 0$ and hence $\dot{\rho}=0$.

\textbf{Proof using invariant curves outside the separatrix.} The proof of the second assertion of Proposition \ref{RHODOT}  is similar to the final steps in the proofs of spectral rigidity results of \cite{GM}, \cite{HeZ12}, and \cite{Vi}, for the ellipse in various settings. We need to show that \eqref{Outside=0} implies $P=0$.  We change  variables by $u = \cos^2 \vartheta$ and this time we set $f(u) =  \frac{P(u)}{
	\sqrt{u(1 - u)}}$. Then
$$\int_{0}^{1}  \frac{f(u)}{ \sqrt{\alpha -  u}}\; du = 0, \;\;
\forall \, \, 1 < \alpha \leq  \cosh^2 \rho_{\max}. $$
Since the left hand side as a function of $\alpha$ is smooth at $\cosh^2\rho_{\max}$, all its Taylor coefficients at this point must vanish. Thus
$$\int_{0}^{1}  f(u) \left (\cosh^2\rho_{\max} -  u \right )^{-n -\frac{1}{2}}\; du = 0 , \quad \forall n \in \mathbb N. $$ By the Stone-Weierstrass theorem, $f=0$, hence $P=0$. 
\end{proof}

\subsection{\label{FLAT} Infinitesimal rigidity and flatness}
In Section 3.2 of our earlier paper \cite{HeZ12}, we proved that infinitesimal rigidity
implies flatness, which completes the proof of Corollary
\ref{RIGIDCOR}:

\end{document}